\newcounter{theorems}[section]
\newtheorem{theorem}[theorems]{Theorem}
\newtheorem{definition}[theorems]{Definition}
\newtheorem{lemma}[theorems]{Lemma}
\newtheorem{corollary}[theorems]{Corollary}
\newtheorem{proposition}[theorems]{Proposition}
\newtheorem{claim}[theorems]{Claim}
\newtheorem{remark}{Remark}
\title{}
\author{Xenia Dimitrakopoulou}
\newcommand{\R}{\mathbb{R}}
\newcommand{\C}{\mathbb{C}}
\newcommand{\Z}{\mathbf{Z}}
\newcommand{\Q}{\mathbb{Q}}
\newcommand{\A}{\mathbb{A}}
\newcommand{\cX}{\mathcal{X}}
\newcommand{\cD}{\mathcal{D}}
\newcommand{\Qp}{\mathbf{Q}_p}
\newcommand{\Zp}{\mathbf{Z}_p}
\newcommand{\cA}{\mathcal{A}}
\newcommand{\cW}{\mathcal{W}}
\newcommand{\la}{\lambda}
\newcommand{\al}{\alpha}
\newcommand{\be}{\beta}
\newcommand{\ml}{(\mu,\la)}
\newcommand{\mlj}{(\mu,\la+j)}
\newcommand{\cO}{\mathcal{O}}
\newcommand\bigA{\makebox(0,0){{\huge{A}}}}
\newcommand{\cL}{\mathcal{L}}
\newcommand{\cP}{\mathscr{P}}
\newcommand{\cM}{\mathcal{M}}
\newcommand{\He}{\mathbf{\mathcal{H}}}
\newcommand{\fm}{\mathfrak{m}}
\newcommand{\ps}{\left(\pi,\sigma\right)}
\newcommand{\psp}{\left(\pi_p,\sigma_p\right)}
\newcommand{\Un}{U_n}
\newcommand{\Uno}{U_{n+1}}
\newcommand{\Gac}{\Gamma^{\text{ac}}}
\newcommand{\PO}{\Phi_{\Omega}}
\newcommand{\cml}{\operatorname{Crit}\left(\mu,\la\right)}
\newcommand{\IwG}{\mathrm{Iw_G}}
\newcommand{\Fp}{\mathbb{F}_p}
\newcommand{\XG}{X(\Gac)}
\date{}
\begin{document}

	\title{Anticyclotomic $p$-adic $L$-functions for Coleman families of $\Uno \times \Un$}
	\maketitle
	\pagestyle{fancy}
	\fancyhead{}
	\fancyhead[RE]{Xenia Dimitrakopoulou}
	\fancyhead[LO]{Anticyclotomic $p$-adic $L$-functions for Coleman families of $\Uno \times \Un$}

	\begin{abstract}
		By $p$-adically interpolating the branching law for the spherical pair $\left(U_n, U_{n+1} \times U_{n}\right)$ of definite unitary groups, we construct a $p$-adic $L$-function attached to cohomological automorphic representations of $U_{n+1} \times U_{n}$. Under a further multiplicity one assumption, we extend the construction to Coleman families. Our $p$-adic $L$-function interpolates the square root of the central critical $L$-value. It has weight and anticyclotomic variables and its construction relies on the proof of the unitary Gan--Gross--Prasad conjecture.
	\end{abstract}

	\tableofcontents
	
	\section{Introduction}

    $L$-functions are rich and intricate complex analytic functions that encode deep arithmetic information about number-theoretic objects. The study of their critical values has been a long on-going project in the area as they are expected to relate to algebraic invariants, as predicted by the deep conjectures of Birch--Swinnerton-Dyer and Bloch--Kato. The most fertile methods of proof of the partial results we have towards these open problems have included $p$-adic versions of these conjectures, for a fixed prime $p$. They predict that the $p$-adic analytic rank associated to a $p$-adic $L$-function is equal to the algebraic rank of a $p$-adic algebraic invariant. This is conditional to the existence of a $p$-adic $L$-function, which we expect can be constructed as the $p$-adic avatar of any automorphic $L$-function that has critical values.

	Inspired by \cite{kasten} and \cite{Jan16}, we study the problem of $p$-adic interpolation of Rankin--Selberg products. We use the new method of constructing $p$-adic $L$-functions via interpolation of representation-theoretic branching laws, as in \cite{GS} and \cite{BDGJW}. We construct a multi-variable $p$-adic $L$-function for cohomological automorphic forms of $U(n+1) \times U(n)$ in the anticyclotomic direction with variation in families, by exhibiting interpolation properties of classical central $L$-values.
	
	More precisely, let $E$ be an imaginary quadratic field. Fix a prime $p$ which splits in $E$. Denote by $\Gamma^{\text{ac}}$ the quotient of the idele class group giving anticyclotomic extensions of $E$ unramified outside of $p$. Let $V$ be a Hermitian space of dimension $n+1$ over $E$ and $W$ a non-degenerate subspace of codimension one. Denote the unitary groups by $\Uno=U(V),$ $\Un=U(W)$ respectively and let $G\coloneqq\Uno \times \Un$. We view $H\coloneqq U_n$ as a subgroup of $G$ via the diagonal embedding. We assume that $G(\R)$ is compact. Let $\pi=\otimes \pi_v$ (resp. $\sigma=\otimes \sigma_v$) be an irreducible cuspidal automorphic representation of $\Uno$ (resp. $\Un$) of cohomological weight $\mu=(\mu_1,...,\mu_{n+1})$ (resp. $\la=(\la_1,...,\la_n)$) such that $\operatorname{Hom}_{H_v}(\pi_v\times\sigma_v,\C)\neq 0$ for all places $v$. We further assume that $\ps$ is Iwahori spherical at $p$. We say that such a $\ps$ satisfies ($\bigstar$). Let $\pi_{n+1, E}$ (resp. $\pi_{n, E}$) be the base change of $\pi$ (resp. $\sigma$) to $\mathrm{GL}_{n+1}\left(\mathbb{A}_E\right)$ (resp. $\mathrm{GL}_{n}\left(\mathbb{A}_E\right)$). Denote by $L\left(s, \pi\times \sigma\right)$ the Rankin-Selberg convolution $L$-function $L\left(s, \pi_{n+1, E} \times \pi_{n, E}\right)$ as studied by Jacquet--Piatetski-Shapiro--Shalika \cite{JPSS}.
	
	We study the problem of $p$-adic interpolation of the central $L$-value of $L(s, \pi \times \sigma)$ at the split prime $p$. If
	
	$$-\mu_{n+1} \geq \lambda_{1} \geq-\mu_{n-1} \geq \lambda_{2} \geq \cdots \geq \lambda_{n} \geq-\mu_{1},$$
	
	then the central value $s=1/2$ of $L(s, \pi \times \sigma)$ is critical. To address this problem, we consider the branching law from $U_{n+1}$ to $U_n$, that is the set $\operatorname{Hom}_{U_{n}}\{\mathcal{V}_{\check{\mu}},\mathcal{V}_{\la}\}$ and its dimension, where  $\mathcal{V}_{\la}$ is the algebraic $U_{n}$-representation of highest weight $\la$ and $\mathcal{V}_{\check{\mu}}$ the algebraic $\Uno$-representation of highest weight $\check{\mu}=(-\mu_{n+1},...,-\mu_{1})$. 
 
    More specifically, the branching law for $\Un \subset \Uno$ is multiplicity free and states that
	\begin{equation}
		\operatorname{dim} \operatorname{Hom}_{U_{n}}\{\mathcal{V}_{\check{\mu}},\mathcal{V}_{\la}\} =1 \iff	-\mu_{n+1} \geq \lambda_{1} \geq-\mu_{n-1} \geq \lambda_{2} \geq \cdots \geq \lambda_{n} \geq-\mu_{1}. \nonumber
	\end{equation}
 
	In Section \ref{bl}, we assume that the above inequality holds and select an appropriate vector $ev_{u_{(\mu,\la)}} \in \operatorname{Hom}_{U_{n}}\{\mathcal{V}_{\check{\mu}},\mathcal{V}_{\la}\}=\operatorname{Hom}_{H}\left(\mathcal{V}_{\mu}^{\vee} \otimes \mathcal{V}_{\lambda}^{\vee}, L\right),$ for $L/\Qp$ a large enough finite extension. We consider the space of distributions $\cD_{\ml}$, which gives a $p$-adic analog of $\mathcal{V}_{\mu}^{\vee} \otimes \mathcal{V}_{\lambda}^{\vee}=\mathcal{V}_{\ml}^{\vee}$ and we $p$-adically interpolate the aforementioned branching law by constructing a map  $ \kappa_{\ml} : \cD_{\ml} \to \cD(\Gamma^{\text{ac}}, L)$ on the level of distributions such that the following diagram commutes (see Proposition \ref{blprop}):

    \begin{center}
			\begin{tikzcd}

				\cD_{\ml}(L) \arrow{d} \arrow{r}{\kappa_{\ml}}	& \cD(\Gamma^{\text{ac}},L) \arrow{d} \\
				\mathcal{V}_{\ml}^{\vee}(L) \arrow{r}{ev_{u_{(\mu,\la)}}}	& L.
			\end{tikzcd}
			
		\end{center}

	Consider now an element of Gross' style modular forms
	\begin{align*}
		f \in M(G, \mathcal{V}_{\mu}^{\vee} \otimes \mathcal{V}_{\lambda}^{\vee})=&\{f: G(\A) \to \mathcal{V}_{\mu}^{\vee} \otimes \mathcal{V}_{\lambda}^{\vee}: f \text{ is locally constant on } G(\A)  \\
		&\text{ and } f(\gamma g)=\gamma f(g) \text{ for all } \gamma \in G(\Q)\}.
	\end{align*}
	By composing it with  $ev_{u_{(\mu,\la)}}$, we get an automorphic form $\phi =  ev_{u_{(\mu,\la)}} \circ f \in \pi \times \sigma$. We can therefore compute its automorphic period integral 
 
                    $$\cP_H(\phi)=\int_{Z^H(\mathbb{A}) H(\Q) \backslash H(\mathbb{A})} \phi(h) d h .$$
 
 and apply the unitary Gan--Gross--Prasad conjecture (now a theorem; see \cite{ggp}, \cite{BPLZZ}, \cite{GGPfinal} and Section \ref{ggp}) to get 
	
	$$\frac{|\cP_{H}(\phi)|^2}{\langle \phi, \phi^{\vee} \rangle}=\frac{\Delta}{S_{\pi\times\sigma}} \frac{L(1/2, \pi\times\sigma)}{L(1,\pi,\mathrm{Ad})L(1,\sigma,\mathrm{Ad})}\prod_{\nu \in {S}}\alpha_{\nu}(\phi_{\nu},\phi_{\nu}),$$
	
	where $\Delta$ is a certain product of abelian $L$-values, $S_{\pi\times\sigma}$ is the size of the Arthur parameter of $\pi\times\sigma$, $S$ is a large enough set of primes containing $p$ and $\alpha_{\nu}(\phi_{\nu},\phi_{\nu})$ is a regularized matrix coefficient.
	
	Fix $\widetilde{\ps}=(\ps, a_p)$ a non-critical $p$-refinement of ${\ps}$ (this is a choice of eigenvalue $a_p$ with a specific slope condition as explained in Section \ref{HA}).	We would like to $p$-adically interpolate twists by anticyclotomic accessible characters of $\ps$. These are characters of $\Gac$ of infinity type $(-j,j)$, for $j$ in the set	
	$$\cml=  \{j \in \Z | -\mu_{n+1} \geq \la_{1} +j \geq -\mu_{n} \geq \la_{2}+j \geq...\geq \la_{n}+j \geq -\mu_{1}\}.$$
	
	By replacing the space of algebraic modular forms with their $p$-adic analogue of distribution valued forms $M_p(G, \cD_{\ml})$ and the branching law vector $ev_{u_{(\mu,\la)}}$  with its $p$-adic avatar $\kappa_{\ml}$, for each classical form $\phi \in \widetilde{\ps}$, we construct a distribution $\cL_p(\phi) \in \cD(\Gamma^{\text{ac}}, L)$ such that integrating an accessible character $\chi \in \cX(\Gamma^{\text{ac}})$ of conductor $p^{\beta}$ with $\beta>0$ against it interpolates the square root of the central critical value $L(1/2, \pi\times(\sigma \otimes \chi)).$ 
	
	In particular, our first main result is the following (Theorem \ref{theorem1}):
	
	\begin{theorem}\label{thm1}
		There exists a distribution $\cL_p(\widetilde{\ps}) \in \cD(\Gamma^{\text{ac}}, L)$ of growth $v_p(a_p)$ such that for all accessible characters $\chi \in \cX(\Gamma^{\text{ac}})$ of conductor $p^{\beta}$ with $\beta>0$, if we set
		
		$$\cL_p(\widetilde{\ps}, \chi) \coloneqq \int_{\Gamma^{\text{ac}}} \chi(x) d\cL_{p}(\widetilde{\ps}),$$ 
		
		then 
		
		$$|\cL_p (\widetilde{\ps}, \chi)|^2=\frac{\Delta}{S_{\pi\times\sigma}} \frac{L(1/2, \pi\times\left(\sigma\otimes\chi\right))}{L(1,\pi,\mathrm{Ad})L(1,\sigma,\mathrm{Ad})} \cdot \alpha^{\star},$$
		
		where $\Delta$ is a product of abelian $L$-values, $S_{\pi\times\sigma}$ relates to the Arthur parameter of the representation $\ps$ and the correction factor $\alpha^{\star}$ depends on the choice of Haar measure, the $p$-refinement of $\ps$ and the conductor of $\chi.$ 
		
		If moreover $v_p(a_p)< \#\cml$, then $\cL_p(\widetilde{\ps})$ is uniquely determined by the above interpolation formula.
		
	\end{theorem}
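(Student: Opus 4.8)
The plan is to assemble the distribution $\cL_p(\widetilde{\ps})$ from the data already produced in the body of the paper and then verify the two assertions — the interpolation formula and the uniqueness — in turn. First I would take a classical form $\phi$ in the $p$-refined eigenspace $\widetilde{\ps}$ and lift it to the distribution-valued space $M_p(G, \cD_{\ml})$, using that $\ps$ is Iwahori spherical at $p$ so that the $p$-refinement $a_p$ picks out a well-defined vector; then I would apply the interpolated branching map $\kappa_{\ml}\colon \cD_{\ml} \to \cD(\Gamma^{\text{ac}}, L)$ fibrewise and integrate over $Z^H(\A)H(\Q)\backslash H(\A)$ exactly as in the definition of $\cP_H$, setting $\cL_p(\widetilde{\ps}) \coloneqq \int_{[H]} \kappa_{\ml}(f_p(h))\, dh \in \cD(\Gamma^{\text{ac}}, L)$. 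The growth bound $v_p(a_p)$ should come from the standard estimate on the $U_p$-operator acting on distributions of bounded order: iterating the normalised Hecke operator contracts the support, and the admissibility/growth of $\kappa_{\ml}(f_p)$ is controlled by the slope of $a_p$, so $\cL_p(\widetilde{\ps})$ is $v_p(a_p)$-admissible.

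Next I would establish the interpolation formula. Fix an accessible character $\chi$ of conductor $p^\beta$ with $\beta > 0$. Pairing $\chi$ against $\cL_p(\widetilde{\ps})$ commutes with the integral over $[H]$, so $\cL_p(\widetilde{\ps}, \chi) = \int_{[H]} \big(\int_{\Gamma^{\text{ac}}} \chi\, d\kappa_{\ml}(f_p(h))\big)\, dh$. By the commuting square of Proposition \ref{blprop}, specialising $\kappa_{\ml}$ at an accessible $j \in \cml$ (the one matching the infinity type $(-j,j)$ of $\chi$) recovers the classical branching vector $ev_{u_{(\mu,\la+j)}}$, up to the local modification at $p$ coming from $\chi$ being ramified — this is where the Iwahori-level structure and the choice of $a_p$ produce the local zeta integral at $p$ that gets absorbed into $\alpha^\star$. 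Hence the inner integral becomes, up to that local factor, the classical period $\cP_H(\phi_\chi)$ for the twisted form $\phi_\chi \in \pi \times (\sigma \otimes \chi)$. Squaring and invoking the unitary Gan--Gross--Prasad formula quoted in the introduction (with $\sigma$ replaced by $\sigma \otimes \chi$) gives exactly
$$|\cL_p(\widetilde{\ps}, \chi)|^2 = \frac{\Delta}{S_{\pi \times \sigma}} \frac{L(1/2, \pi \times (\sigma \otimes \chi))}{L(1, \pi, \mathrm{Ad})L(1, \sigma, \mathrm{Ad})} \cdot \alpha^\star,$$
where $\alpha^\star$ collects the product of the local matrix coefficients $\alpha_\nu$ over $S$, the Haar-measure normalisation, and the local factor at $p$ depending on $a_p$ and on $\beta$; that the adjoint $L$-value is insensitive to the twist (since $\mathrm{Ad}$ of $\sigma \otimes \chi$ equals $\mathrm{Ad}$ of $\sigma$, $\chi$ being anticyclotomic) is what lets the denominator stay as stated.

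Finally, uniqueness: a distribution of growth (order) strictly less than $\#\cml$ is determined by its values against characters of infinity type $(-j, j)$ for $j$ ranging over $\cml$, since there are $\#\cml$ such "algebraic" directions and the finite-order twists by $\chi$ of conductor $p^\beta$, $\beta \to \infty$, together with these algebraic weights form a set on which an admissible distribution of that growth is rigid — concretely, two $h$-admissible distributions agreeing on all such $\chi$ have difference of growth $h < \#\cml$ vanishing on a Zariski-dense-enough set of points in $\cX(\Gamma^{\text{ac}})$, forcing it to be zero. I expect the main obstacle to be the precise bookkeeping at $p$: matching the specialisation of $\kappa_{\ml}$ against a ramified $\chi$ with the classical branching vector requires computing a local Iwahori-level zeta integral and checking it is nonzero, so that the square root of the $L$-value is genuinely captured and $\alpha^\star$ is an explicit unit (or explicit power of $p$) rather than something that could vanish; everything else is a formal transport of the GGP identity through the interpolation machinery.
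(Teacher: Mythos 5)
Your proposal follows the same overall strategy as the paper (interpolate the branching law, push through the automorphic period, invoke the Gan--Gross--Prasad formula, and use the Amice--V\'elu/Vishik criterion for uniqueness), but there is a genuine gap in the construction of the distribution itself. You define $\cL_p(\widetilde{\ps}) \coloneqq \int_{[H]} \kappa_{\ml}(f_p(h))\,dh$ directly at the fixed Iwahori level, with no normalisation. As written, this is a single distribution that does not see $\beta$ at all, and the $p$-refinement $a_p$ enters only in your choice of Iwahori-fixed vector. But the interpolation formula you are aiming for has an explicit factor $\bigl(\alpha_p^{-1}p^{-n(n+1)(2n+1)/6}\bigr)^{\beta}$ in $\alpha^{\star}$, and the growth of the distribution is claimed to be exactly $v_p(a_p)$ --- neither of these can be extracted from your definition. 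What the paper actually does is pass to level $K_\beta=(ut_p^{\beta})K(ut_p^{\beta})^{-1}$, form the twist $\tilde\varPhi_\beta = \pi(ut_p^\beta)\varPhi$, and set $\cL(\varPhi) = a_p^{-\beta}\,\cL_{p,\beta}(\tilde\varPhi_\beta)$; the nontrivial content is that this is \emph{independent of} $\beta$ (via the $U_p$-eigenvalue relation), and it is precisely this stabilisation that (i) injects $a_p$ into the construction, (ii) yields the growth bound of Proposition \ref{growth} (the $a_p^{-r}$ prefactor is the source of the $p^{rh}$), and (iii) produces the $\beta$-dependence in $\alpha^\star$ when you integrate a conductor-$p^\beta$ character against it. Your sentence about "the Iwahori-level structure and $a_p$ producing the local zeta integral" gestures at this but does not replace the normalisation, and without it the claimed growth and interpolation factors don't follow from what you wrote.

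Two smaller remarks. First, you write the period as an integral over $[H]$; since $G(\R)$ is compact this is genuinely a \emph{finite} weighted sum over $[H]/K_\beta$, and the paper's growth estimate exploits exactly this finiteness ($\varPhi(x)$ takes finitely many values, so $\sup_x|\varPhi(x)|_1$ bounds everything), which is cleaner than the generic $U_p$-contraction heuristic you invoke. Second, on uniqueness your reasoning is correct in spirit: the Vishik criterion says an $h$-admissible distribution on $\Gamma^{\text{ac}}$ is determined by values against $z^j\chi(z)$ for $j$ running over $\cml$ and $\chi$ of varying $p$-power conductor, provided $h<\#\cml$; the paper cites Vishik/Amice--V\'elu for exactly this. (Note the paper's proof of Theorem \ref{Mthm} uses the condition $v_p(\alpha)<h_{\ml}$ while the statement of Theorem \ref{thm1} uses $v_p(a_p)<\#\cml=h_{\ml}+1$; you matched the statement, which is the sharp form of the criterion.)
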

	
	Our approach allows for arbitrary cohomological weight and relaxes the ordinarity condition of \cite[Thm 5.2]{Liu}, who however works over CM extensions. Our results can be adapted to the more general CM case as well, but we choose to avoid technical complications in order to ease exposition.

    Assuming that $\widetilde{\ps}$ has the multiplicity one property with respect to its level (see Definition \ref{multassumption}), we can extend the above theorem to a Coleman family over $\widetilde{\ps}.$ For an affinoid $\Omega$ around $\ml$ in the weight space, we denote the space of distributions over it by $\cD_{\Omega}$. In Proposition \ref{blprop} and Corollary \ref{blcor}, we construct a map $ \kappa_\Omega : \cD_{\Omega} \to \cD(\Gamma^{\text{ac}}, \cO_\Omega)$, such that the following diagram commutes:
        \begin{center}
			\begin{tikzcd}
				
				\cD_\Omega \arrow{d}{sp_{\ml}} \arrow{r}{{\kappa_\Omega}}
				& \cD(\Gac, \cO_\Omega) \arrow{d}{sp_{\ml}} \\
				\cD_{\ml}(L) \arrow{d} \arrow{r}{\kappa_{\ml}}	& \cD(\Gamma^{\text{ac}},L) \arrow{d} \\
				\mathcal{V}_{\ml}^{\vee}(L) \arrow{r}{ev_{u_{(\mu,\la)}}}	& L,
			\end{tikzcd}
			
		\end{center}

  where $sp_{\ml}$ is the specialisation to weight $\ml.$
	
	In Section \ref{plf}, we rewrite the automorphic period integral as 
 $$\cP_H:  M(G, \mathcal{V}_{\mu}^{\vee} \otimes \mathcal{V}_{\lambda}^{\vee}) \to L$$
	
	via $$\cP_H(\phi)= \mu(K) \sum_{x \in K\backslash G(\A)/G(\Q)}  \frac{ev_{u_{(\mu,\la)}}(\phi(x))}{\left|\Gamma \cap x^{-1}Kx\right|},$$
	
	where $K$ is the level of $\phi$. We then define the $p$-adic avatar of the integral, namely a map $\cL_p : M_p(G, \cD_{\Omega})^{a_p} \to \cD(\Gamma^{\text{ac}}, \cO_\Omega),$ on the $p$-adic modular forms with values in $\cD_{\Omega}$, localised at a non-critical $p$-refinement $a_p$ of $\ps$ given by

	$$\cL_p\left(\phi\right)=\mu(K) \sum_{x \in K\backslash G(\A)/G(\Q)}  \frac{\kappa_{\Omega}(\phi(x))}{\left|\Gamma \cap x^{-1}Kx\right|}.$$

	Using \cite{BDJ} under the further multiplicity one assumption for the level of $\ps$ (see Section \ref{fam}), for the affinoid $\Omega$ in the weight space, we show the existence of a Coleman family $\Phi_{\Omega}$ passing through $\widetilde{\ps}$. Therefore, for every classical $\phi$, which is the specialisation of $\PO$ at a classical point, we can integrate characters $\chi \in \cX(\Gamma^{\text{ac}})$ against $\cL_p(\phi)$ and thus interpolate $L(1/2, \pi\times(\sigma \otimes \chi)).$ In Section \ref{plf} we prove our second main result (Theorem \ref{Mthm}):

	\begin{theorem}\label{mainthm}
		
		There exists a unique up to scalars non-zero distribution $\cL_p(\PO) \in \cD(\Gamma^{\text{ac}}, \cO_\Omega)$ of growth $v_p(a_p),$
		such that for a classical point $\ml \in \Omega$ whose corresponding automorphic representation $\ps$ satisfies ($\bigstar$), integrating $sp_{\ml}\left(\cL_p(\Phi_{\Omega})\right)$ against an accessible character $\chi \in \cX(\Gamma^{\text{ac}})$ of conductor $p^{\beta}$ with $\beta>0$, interpolates the square root of the central critical value $L(1/2, \pi\times(\sigma \otimes \chi)).$ In particular, there exists test vector $\phi$, such that if we set
		
		$$\cL_p(\phi)=sp_{\ml}\left(\cL_p(\Phi_{\Omega})\right) \text{  and  
 }\cL_p(\widetilde{\ps}, \chi) \coloneqq \int_{\Gamma^{\text{ac}}} \chi(x) d\cL_{p}(\phi),$$ 
		
		then
		
		$$|\cL_p (\widetilde{\ps}, \chi)|^2=\frac{\Delta}{S_{\pi\times\sigma}} \frac{L(1/2, \pi\times\left(\sigma\otimes\chi\right))}{L(1,\pi,\mathrm{Ad})L(1,\sigma,\mathrm{Ad})} \cdot \alpha^{\star},$$
		
		where $\Delta$ is a product of abelian $L$-values, $S_{\pi\times\sigma}$ is the size of the Arthur parameter of the representation $\ps$ and the correction factor $\alpha^{\star}$ depends on the choice of Haar measure, the test vector, the $p$-refinement of $\ps$ and the conductor of $\chi.$ 
		
	\end{theorem}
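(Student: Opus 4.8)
The plan is to feed the Coleman family into the $p$-adic branching-law machinery of Section~\ref{bl} and then recover each classical specialisation through the commuting diagrams of Proposition~\ref{blprop} and Corollary~\ref{blcor}, thereby reducing the statement to the single-weight situation governed by the Gan--Gross--Prasad period. First, invoking \cite{BDJ} under the multiplicity one assumption of Definition~\ref{multassumption} for the level of $\ps$, I would produce over the affinoid $\Omega\ni\ml$ a Coleman family $\PO\in M_p(G,\cD_\Omega)^{a_p}$ specialising at $\ml$ to a classical form in the $p$-refinement $\widetilde{\ps}=(\ps,a_p)$; this family is unique up to scalars. Setting
$$\cL_p(\PO)\coloneqq\mu(K)\sum_{x\in K\backslash G(\A)/G(\Q)}\frac{\kappa_\Omega(\PO(x))}{\left|\Gamma\cap x^{-1}Kx\right|}\in\cD(\Gac,\cO_\Omega)$$
then defines the required distribution, and its uniqueness up to scalars is immediate from that of $\PO$ together with the linearity of $\cL_p$ (this is weaker than, and independent of, the ``determined by interpolation'' clause of Theorem~\ref{thm1}, which needed $v_p(a_p)<\#\cml$). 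The growth bound $v_p(a_p)$ is obtained exactly as there: $\kappa_\Omega$ is continuous into locally analytic distributions and equivariant for the $U_p$-action, so the slope condition on $a_p$ makes the contribution of the $\beta$-th anticyclotomic layer grow at most like $\beta^{v_p(a_p)}$, which is the Vishik admissibility criterion.

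Next I would compute the specialisations. For a classical $\ml\in\Omega$ with $\ps$ satisfying~($\bigstar$), Corollary~\ref{blcor} gives $sp_{\ml}\bigl(\cL_p(\PO)\bigr)=\cL_p\bigl(sp_{\ml}(\PO)\bigr)=\cL_p(\phi)$, where $\phi=sp_{\ml}(\PO)$ is the test vector of the statement. Fix an accessible $\chi\in\cX(\Gac)$ of infinity type $(-j,j)$, so $j\in\cml$ and hence, by the branching law, $\dim\operatorname{Hom}_{U_n}\{\mathcal{V}_{\check{\mu}},\mathcal{V}_{\la+j}\}=1$ with branching vector $ev_{u_{\mlj}}$. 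Integrating $\chi$ against $\cL_p(\phi)=\mu(K)\sum_x\kappa_{\ml}(\phi(x))/\left|\Gamma\cap x^{-1}Kx\right|$ and using the defining interpolation property of $\kappa_{\ml}$ from Section~\ref{bl} --- pairing with a character of infinity type $(-j,j)$ recovers the branching vector $ev_{u_{\mlj}}$ for each $j\in\cml$ --- one identifies $\int_{\Gac}\chi\,d\cL_p(\phi)=\cL_p(\widetilde{\ps},\chi)$ with the automorphic period $\cP_H(\phi_\chi)$ of the form $\phi_\chi=ev_{u_{\mlj}}\circ(\phi\otimes\chi)\in\pi\times(\sigma\otimes\chi)$, rewritten as the finite sum of Section~\ref{plf}, up to an explicit local factor at $p$. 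That $p$-factor is the ratio between the Amice transform of $\kappa_\Omega$ at $\chi$ of conductor $p^\beta$ and the naive algebraic branching pairing; it is built from the normalised local zeta integral at $p$ on the Iwahori-fixed $p$-refined vectors together with powers of $a_p$, and it enters the correction factor $\alpha^\star$.

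It then remains to apply the unitary Gan--Gross--Prasad conjecture, now a theorem \cite{ggp,BPLZZ,GGPfinal} (Section~\ref{ggp}), to $\cP_H(\phi_\chi)$ for the pair $\pi\times(\sigma\otimes\chi)$, which is critical at $s=1/2$ precisely because $j\in\cml$:
$$\frac{|\cP_H(\phi_\chi)|^2}{\langle\phi_\chi,\phi_\chi^\vee\rangle}=\frac{\Delta}{S_{\pi\times\sigma}}\,\frac{L(1/2,\pi\times(\sigma\otimes\chi))}{L(1,\pi,\mathrm{Ad})\,L(1,\sigma,\mathrm{Ad})}\prod_{\nu\in S}\alpha_\nu(\phi_{\chi,\nu},\phi_{\chi,\nu}).$$
Here one uses that the anticyclotomic twist $\chi$ factors through the determinant and is hence invisible to $\mathrm{Ad}$, so that $L(1,\sigma\otimes\chi,\mathrm{Ad})=L(1,\sigma,\mathrm{Ad})$, and likewise that $\Delta$ (a product of abelian $L$-values attached to $G$) and $S_{\pi\times\sigma}$ (the size of the component group of the Arthur parameter) are unaffected by the twist. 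Substituting the identification of the previous paragraph and absorbing $\mu(K)$, the pairing $\langle\phi_\chi,\phi_\chi^\vee\rangle$, the regularised local matrix coefficients $\alpha_\nu$ for $\nu\in S$, and the $p$-factor above into $\alpha^\star$ yields $|\cL_p(\widetilde{\ps},\chi)|^2=\tfrac{\Delta}{S_{\pi\times\sigma}}\tfrac{L(1/2,\pi\times(\sigma\otimes\chi))}{L(1,\pi,\mathrm{Ad})L(1,\sigma,\mathrm{Ad})}\cdot\alpha^\star$. For nonvanishing, note that $\cL_p(\PO)=0$ would force $\cP_H$ to vanish on every classical member of the family and hence $L(1/2,\pi\times(\sigma\otimes\chi))=0$ for every accessible $\chi$; ruling this out, by a suitable choice of test vector together with nonvanishing of the central value for one anticyclotomic twist, gives $\cL_p(\phi)\neq 0$ and therefore $\cL_p(\PO)\neq 0$.

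The step I expect to be the real obstacle is the local analysis at $p$: one must check that $\kappa_\Omega(\PO(x))$ interpolates, uniformly in the accessible character \emph{and} in the classical weight over $\Omega$, the correctly normalised local branching integral --- equivalently, that its Amice transform at a character of conductor $p^\beta$ reproduces the classical period up to the expected $a_p$-power Euler-type factor --- and, simultaneously, that the resulting distribution has growth exactly $v_p(a_p)$. This is where Iwahori-sphericity at $p$, the slope condition on $a_p$, and an explicit evaluation of local zeta integrals on $p$-stabilised vectors must all be reconciled; by contrast, the input of \cite{BDJ} for the Coleman family and of Gan--Gross--Prasad are invoked as black boxes, and the passage between the family and its classical specialisations through Proposition~\ref{blprop} and Corollary~\ref{blcor} is formal. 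A secondary technical point is arranging the test vectors and Haar measures coherently over $\Omega$, so that $\alpha^\star$ varies $p$-adic analytically along the family.
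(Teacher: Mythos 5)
Your overall architecture matches the paper's: produce the Coleman family from \cite{BDJ} and the multiplicity one hypothesis, push through the $p$-adic branching map $\kappa_\Omega$, specialise via Corollary \ref{blcor}, and close the loop with Gan--Gross--Prasad. But the step you yourself flag as ``the real obstacle'' --- the local analysis at $p$ --- is not a technicality to be deferred; it is exactly where the paper's proof does its actual work, and your formulation of the construction skips the device that makes it go. Your candidate distribution $\cL_p(\PO)=\mu(K)\sum_x\kappa_\Omega(\PO(x))/|\Gamma\cap x^{-1}Kx|$ is the naive $\beta=0$ sum at Iwahori level. In the paper one instead defines $\mathfrak{L}_p(\Phi)=\mathfrak{L}_{p,\beta}(\alpha_p^{-\beta}\tilde\Phi_\beta)$, where $\tilde\Phi_\beta$ is translated by $u\,t_p^\beta$ with $u=(\xi,1_n)$ and the sum runs over cosets of $K_\beta=(u t_p^\beta)K(u t_p^\beta)^{-1}$. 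This translation is forced by the support condition: $u_\Omega$ (hence $\kappa_\Omega$) is supported on $\operatorname{Iw}_G^1$, built out of $g_0$, so to hit anything nonzero one has to move the evaluation points into the open cell $\overline{B}\,(g_0,1_n)\,H$, and the Claim in the paper's proof is precisely that $(g_0,1_n)$ and $(\xi,1_n)$ lie in the same $\overline{B}\backslash G/H$-orbit. One must then check the expression is independent of $\beta$, which produces the $\alpha_p^{-\beta}$ normalisation that you only gesture at as ``the $p$-factor''. Without this, integrating a ramified character of conductor $p^\beta$ against your sum at level $K$ does not reproduce $\cP_{H,\chi}(\tilde\phi_\beta)$, so you cannot plug into Proposition \ref{testvectors} (whose local computation via the Birch Lemma of \cite{Jan11} is carried out precisely on the $(\xi,1_n)t_p^\beta$-translated Iwahori vector). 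This is a genuine gap, not a normalisation to be absorbed into $\alpha^\star$ after the fact.

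Two smaller points. First, your uniqueness argument (linearity of $\cL_p$ plus uniqueness of $\PO$ as a generator of a rank-one module) only shows that \emph{your construction} outputs a line; it does not show that any distribution of growth $v_p(a_p)$ satisfying the interpolation formula lies on that line, which is what the theorem asserts. The paper establishes this via Vishik/Amice--V\'{e}lu admissibility together with the Zariski density in $\Omega$ of classical points of very small slope --- this is the sense in which uniqueness in the family case follows from mere non-criticality, whereas the single-weight Theorem \ref{thm1} needs the stronger $v_p(a_p)<h_{\ml}$. Second, the growth heuristic ``the $\beta$-th layer grows like $\beta^{v_p(a_p)}$'' has the wrong shape; the correct statement from Proposition \ref{growth} is $|\cL_p(\varPhi)|_r\le C\,p^{r\,v_p(a_p)}$, obtained by writing $\cL_p(\varPhi)=a_p^{-r}\cL_{p,r}(\tilde\varPhi_r)$ and bounding $|\kappa_{\ml}(\tilde\varPhi_r(x))|_r$ uniformly using the finite range of $\varPhi$.
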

	
	Note that the uniqueness in this case follows from the non-criticality of the $p$-refinement, unlike the previous theorem where condition on the slope needed to be more restrictive.
	
	\section*{Acknowledgements}
	I am indebted to my advisors David Loeffler and Chris Williams for suggesting the problem and for many enlightening discussions. I would also like to thank them for their comments on earlier drafts of this work as well as their guidance throughout the process. 
	
	I am thankful to Daniel Barrera Salazar for helpful discussions and the Universidad de Santiago de Chile for hosting me while part of this work was being completed.
	
	I am a Swinnerton Dyer scholar, supported by the Warwick Mathematics Institute Centre for Doctoral Training. I gratefully acknowledge funding by University of Warwick’s Chancellors' International Scholarship scheme. 
	
	\section{Preliminaries}\label{preliminaries}
	
	\subsection{Notation}\label{notation}

 Let $E$ be an imaginary quadratic field and let $\eta_{E / \Q}: \Q^{\times} \backslash \A^{\times} \to \{ \pm 1\}$ be the associated quadratic character. Fix a prime $p$ which splits in $E$ and denote by $P$ the primes of $E$ above $p$. We set:
	
	$$\Gamma_{\Q}=\Q^{\times} \backslash\left(\mathbb{A}^{\infty}\right)^{\times} / \prod_{q\neq p} \mathbb{Z}_q^{\times} \text{  and  } \Gamma=E^{\times} \backslash\left(\mathbb{A}_E^{\infty}\right)^{\times} / \prod_{\nu \notin \mathrm{P}} \cO_{E_{\nu}}^{\times}.$$
	
	Let $\Gamma^{\text{ac}}=\text{ker}\left(\text{Nm}_{E/\Q}:\Gamma \to \Gamma_{\Q}\right)$ and denote by $\XG$ the set of anticyclotomic characters.
	
	Let $V$ be a Hermitian space of dimension $n+1$ and $W$ a non-degenerate subspace of codimension one. Denote the unitary groups by $\Uno=U(V)$ and $\Un=U(W)$ respectively. We assume that $U(W)(\R)$ (resp. $U(V)(\R)$) has singature $(n,0)$ (resp. $(n+1,0)$), so that it's compact. Let $G=U(W) \times U(V)$ and $H$ be the image of $U(W)$ into $G$ via the diagonal embedding \begin{align}
		\iota :  U(W)  & \hookrightarrow G  \nonumber \\
		g & \mapsto \left(\begin{psmallmatrix}
			g &  \\
			& 1 \\
		\end{psmallmatrix}, g\right).  \nonumber 
	\end{align}

    Note that since $p$ splits in $E$, $G(\Qp)=GL_{n+1}(\Qp)\times GL_{n}(\Qp).$
	
	We denote the automorphic quotient of $H$ by $\left[H\right]= Z^{H}(\A) H(\Q) \backslash H(\A)$, where $Z^{H}(\A)$ is the intersection of the adelic points of the centre of $G$ with the adelic points of $H$.
	
	We denote by $B$ the Borel subgroup of $G$ of upper triangular matrices, $\overline{B}$ its opposite Borel subgroup of lower triangular matrices, $T$ its maximal torus and consider decompositions $B=TN$, $\overline{B}=T\overline{N}$ for $N$, $\overline{N}$ the unipotent radicals of $B$ and $\overline{B}$ respectively.
	
	Let $X(T)$ denote the set of algebraic characters of the torus, whose elements we denote by $\ml$ where $\mu=(\mu_{1},..., \mu_{n+1})$ a character of the torus of the $\Uno$-component and $\la=(\la_{1},..., \la_{n})$ is a character of the torus of the $U_{n}$-component. We call a character $\ml\in X(T)$ a weight of $G$ and we say it's dominant if 

            $$\mu_1\geq \mu_2 \geq ...\geq \mu_{n+1} \textit{ and } \la_1\geq\la_2\geq...\geq \la_n.$$
	
	Let $\pi\times \sigma$ be an irreducible cuspidal automorphic representation of $G(\mathbb{A})$ of cohomological weight $\ml$ such that $\operatorname{Hom}_{H_v}(\pi_v\times\sigma_v,\C)\neq 0$ for all places $v$. Let $\pi_{n, E}$ (resp. $\pi_{n+1, E}$) be the base change of $\sigma$ (resp. $\pi$) to $\mathrm{GL}_n\left(\mathbb{A}_E\right)$ (resp. $\mathrm{GL}_{n+1}\left(\mathbb{A}_E\right)$). Denote by $L\left(s, \pi\times \sigma \right)$ the Rankin-Selberg convolution $L$-function $L\left(s, \pi_{n, E} \times \pi_{n+1, E}\right)$ as studied by Jacquet--Piatetski-Shapiro--Shalika \cite{JPSS}.
	
	Define the Iwahori subgroup of $G$ by 
	
	$$\IwG:=\left\{g \in \mathrm{G}\left(\Zp\right): g\left(\bmod p\right) \in B\left(\Fp\right)\right\}.$$
	
	Throughout the paper, we work with Iwahori level $K$ and assume $\ps$ is Iwahori-spherical at $p$, i.e. $\psp^{\IwG} \neq 0.$

     We say that a $\ps$ as in this section satisfies ($\bigstar$).
	\subsection{Hecke Algebra}\label{HA}
	Consider the operator at $p$ given by $U_{p}=\left[\IwG t_{p} \IwG\right]$, where $$t_{p}=\left(\text{diag}(p^n,p^{n-1},...,p,1), \text{diag}(p^n,p^{n-1},...,p)\right) \in G.$$ 
	
	Define the Hecke algebra at $p$ as $\He_p= \Zp \left[U_{p}\right].$
	
	If $\nu$ is a dominant integral weight in $X(T)$ and $l\neq p$ is a prime, let $T_{\nu,l}=\left[G(\mathbf{Z}_l)\nu(l)G(\mathbf{Z}_l)\right]$. The Hecke algebra away from $p$ is defined as $$\He'=\Z\left[T_{\nu,l}| \text{ for all primes }l \neq p \text{ and all dominant integral weights } \nu\right].$$ The full Hecke algebra is $\He=\He'\otimes\He_p.$

    Let $F$ be a number field containing the Hecke field of $\pi_f$. Attached to $\pi$ we have a homomorphism
$$
\psi_\pi: \mathcal{H}^{\prime} \otimes F \rightarrow F
$$
which for $\nu \in X(T)$ and $l\neq p$ sends $T_{\nu,l}$ to its eigenvalue acting on the line $\pi_v^{K_v}$. Let $\mathfrak{m}_\pi:=\operatorname{ker}\left(\psi_\pi\right)$, a maximal ideal in $\mathcal{H}^{\prime} \otimes F$. If $L$ is any field containing $F$, we get an induced maximal ideal in $\mathcal{H}^{\prime} \otimes L$, which by abuse of notation we also denote $\mathfrak{m}_\pi$.

If $M$ is a finite-dimensional $L$-vector space with an action of $\mathcal{H}^{\prime}$, then we denote by the localisation $M_{\mathfrak{m}_\pi}$ the generalised eigenspace $M \llbracket \mathfrak{m}_\pi \rrbracket $ attached to $\psi_\pi$.

	Finally, we will need the following definitions:
	
	\begin{definition}
		Suppose $\psp$ is Iwahori-spherical. A $p$-refinement for the $p$-part $\psp$ of an automorphic representation $\ps$ of $G$ is a choice of a system of Hecke eigenvalues on each component appearing in the decomposition of $\psp^{\IwG}$, that is a choice $\widetilde{\ps} = (\ps, \alpha_p)$, where $\alpha_p=\beta_p\gamma_p$ is a system of $\He_p$-eigenvalues appearing in $\psp^{\IwG}$. Equivalently, there exists an eigenvector $\left(\phi, \varphi\right) \in \psp^{\IwG}$ such that $U_{p}\left(\phi, \varphi\right)= \left(\beta_{p}\phi, \gamma_{p}\varphi\right)=\alpha_{p}\left(\phi, \varphi\right).$
	\end{definition}

    \begin{definition}
         Let $p$-refinement $\widetilde{\ps}$ with $U_{p}$-eigenvalue $\alpha_{p}=\beta_{p}\gamma_{p}$. Define integral normalisations
         \begin{itemize}
             \item $U_p^{\circ}\coloneqq \ml(t_p)U_p$,
             \item $\alpha_{p}^{\circ}=\ml(t_p)\alpha_{p},$
             \item $\beta_{p}^{\circ}=\mu(\text{diag}(p^n,p^{n-1},...,p,1))\beta_{p}$ and
             \item $\gamma_{p}^{\circ}=\la(\text{diag}(p^n,p^{n-1},...,p))\gamma_{p}.$
         \end{itemize}
    \end{definition}

    As explained in \cite[Remark 3.23]{BW21},  $\beta_{p}^{\circ}$ and $\gamma_{p}^{\circ}$ are $p$-integral.
	
	\begin{definition}\label{crit}
		A $p$-refinement $\widetilde{\ps}$ with $U_{p}$-eigenvalue $\alpha_{p}=\beta_{p}\gamma_{p}$ is of non-critical slope if $v_p\left(\beta_{p}^{\circ}\right)<\min_{i}\left(\mu_{i}-\mu_{i+1}+1\right)$ and $v_p\left(\gamma_{p}^{\circ}\right)<\min_{j}\left(\la_{j}-\la_{j+1}+1\right).$
	\end{definition}
	
	As we will see in Section \ref{fam}, this is the small slope condition we will use to exhibit the existence of a Coleman family, following \cite[\S 4.1]{BW21}.

	\subsection{Branching Laws}\label{slopes}

	Let $\mathcal{V}_{\la}$ be a $U_{n}$-representation of highest weight $\la$ and $\mathcal{V}_{\mu}$ a $\Uno$-representation of highest weight $\mu$. The branching law from $\Uno$ to $\Un$ states that:
	
	\begin{theorem}\label{brl}
		$\operatorname{Hom}_{U_{n}}\left(\mathcal{V}_{\mu}^{\vee} \otimes \mathcal{V}_{\lambda}^{\vee}, L\right) \neq 0$ if and only if 
		
		$$-\mu_{n+1} \geq \la_1 \geq -\mu_{n}\geq...\geq -\mu_2 \geq \la_n \geq -\mu_{1}.$$
		
		If that is the case, $\operatorname{Hom}_{U_{n}}\left(\mathcal{V}_{\mu}^{\vee} \otimes \mathcal{V}_{\lambda}^{\vee}, L\right)$ is a one-dimensional space.
		
	\end{theorem}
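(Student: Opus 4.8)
The plan is to prove Theorem \ref{brl} by reducing the branching problem from the compact groups $U_n \subset U_{n+1}$ to a statement about finite-dimensional representations of $\GL_n(\C) \subset \GL_{n+1}(\C)$, and then to invoke the classical Gan--Gross--Prasad style branching rule in the ``non-Archimedean'' algebraic form. Concretely, since $U(W)(\R)$ and $U(V)(\R)$ are compact, by Weyl's unitary trick the restriction of an irreducible algebraic representation of $U_{n+1}$ to $U_n$ is governed by the same combinatorics as the restriction of the corresponding polynomial/rational representation of $\GL_{n+1}$ to $\GL_n$, so $\operatorname{Hom}_{U_n}(\mathcal V_\mu^\vee \otimes \mathcal V_\lambda^\vee, L)$ computes the multiplicity of the $U_n$-type $\mathcal V_\lambda$ inside $\mathcal V_\mu^\vee|_{U_n} = \mathcal V_{\check\mu}|_{U_n}$, where $\check\mu = (-\mu_{n+1}, \dots, -\mu_1)$.

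First I would record the statement of the classical branching rule for $\GL_n \subset \GL_{n+1}$ (equivalently $U_n \subset U_{n+1}$): for irreducible algebraic representations, the restriction is multiplicity-free, and $\mathcal V_{\check\mu}|_{U_n}$ decomposes as the direct sum of all $\mathcal V_\nu$ with $\nu$ interlacing $\check\mu$, i.e.\ $(\check\mu)_1 \geq \nu_1 \geq (\check\mu)_2 \geq \cdots \geq \nu_n \geq (\check\mu)_{n+1}$. Unwinding $\check\mu_i = -\mu_{n+2-i}$, the interlacing condition $(\check\mu)_1 \geq \lambda_1 \geq (\check\mu)_2 \geq \lambda_2 \geq \cdots$ reads exactly
$$-\mu_{n+1} \geq \lambda_1 \geq -\mu_n \geq \lambda_2 \geq \cdots \geq -\mu_2 \geq \lambda_n \geq -\mu_1,$$
which is the asserted inequality; multiplicity-freeness then gives the one-dimensionality clause. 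This is essentially bookkeeping once the classical rule is in hand.

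The substantive step is justifying the classical $\GL_n \subset \GL_{n+1}$ branching rule itself (or citing it). The cleanest self-contained route is via Gelfand--Tsetlin theory: one constructs a basis of $\mathcal V_{\check\mu}$ indexed by Gelfand--Tsetlin patterns, whose top two rows record precisely the $U_n$-isotypic decomposition, and the interlacing inequalities between consecutive rows of a GT pattern are exactly the interlacing conditions above. Alternatively, one can argue by a Pieri/Littlewood--Richardson computation or by a character-theoretic argument using Weyl's character formula and the branching of Schur functions $s_{\check\mu}(x_1, \dots, x_n, 1)$. Either way, I would simply cite a standard reference (e.g.\ Goodman--Wallach or Knapp) rather than reprove it, since the paper only needs the combinatorial output.

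The main obstacle, such as it is, is purely notational: keeping the duals straight. The branching law is naturally stated for $\mathcal V_{\check\mu}|_{U_n} \supset \mathcal V_\lambda$, whereas the theorem is phrased in terms of $\operatorname{Hom}_{U_n}(\mathcal V_\mu^\vee \otimes \mathcal V_\lambda^\vee, L)$; one must use that $\mathcal V_\mu^\vee \cong \mathcal V_{\check\mu}$ as $\Uno$-representations and that $\operatorname{Hom}_{U_n}(\mathcal V_{\check\mu} \otimes \mathcal V_\lambda^\vee, L) \cong \operatorname{Hom}_{U_n}(\mathcal V_{\check\mu}, \mathcal V_\lambda)$ by compactness (complete reducibility and self-duality of the pairing), and check that the interlacing inequality is symmetric enough that replacing $\lambda$ by $\check\lambda$ or flipping the order of the $\mu_i$ lands on the displayed chain. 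Once these identifications are made explicit, the proof is immediate from the cited classical branching rule.
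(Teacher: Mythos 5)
Your proposal is correct and takes essentially the same approach as the paper: the paper's entire proof is a citation of the classical $\GL_n\subset\GL_{n+1}$ (equivalently $U_n\subset U_{n+1}$) multiplicity-free interlacing branching rule to Goodman--Wallach \cite[Thm.~8.1.1]{symmetry} and Raghuram \cite[Cor.~2.20]{raghuram}, leaving implicit the unwinding $\mathcal{V}_\mu^\vee\cong\mathcal{V}_{\check\mu}$ and the tensor--Hom identification $\operatorname{Hom}_{U_n}(\mathcal{V}_{\check\mu}\otimes\mathcal{V}_\lambda^\vee,L)\cong\operatorname{Hom}_{U_n}(\mathcal{V}_{\check\mu}|_{U_n},\mathcal{V}_\lambda)$ that you spell out. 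Your bookkeeping of $\check\mu_i=-\mu_{n+2-i}$ matches the displayed interlacing chain, so the argument is complete.
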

	\begin{proof}
		See \cite[Theorem 8.1.1]{symmetry} and see also \cite[Corollary 2.20]{raghuram}.
	\end{proof}
	
	When this inequality between $\mu$ and $\la$ holds, we say $\mu$ interlaces $\la.$
	
	We can therefore study the set
	
	$$\cml=  \{j \in \Z | -\mu_{n+1} \geq \la_{1} +j \geq -\mu_{n} \geq \la_{2}+j \geq...\geq \la_{n}+j \geq -\mu_{1}\},$$
	
	or equivalently the integers $j$ such that
	
	$$ \min_{i}\left(\la_i-\mu_{n+2-i}\right) \geq j \geq \max_{i}\left(-\mu_{n+1-i}-\la_{i}\right).$$
	
	Set $h_{\ml}\coloneqq|\min_{i}\left(\la_i-\mu_{n+2-i}\right)-\max_{i}\left(-\mu_{n+1-i}-\la_{i}\right)|$.
	
	\begin{definition}
		We say that $\widetilde{\ps}$ is of very small slope if $v_p(\alpha_{p})<h_{\ml}.$
	\end{definition}
	
	We note that if $\widetilde{\ps}$ is of very small slope, then it is of non-critical slope. 
	
	For each $j \in \cml$, we can consider an anticyclotomic character $\chi \in \XG$ of infinity type $(-j,j)$. We then consider the twist of $\ps$ by $\chi$ as we want to study the central $L$-value of $L(s, \pi \times (\sigma\otimes \chi))$, which is critical, see \cite[Chapter 10]{LinThesis}.
	
	\begin{definition}
		We call a character $\chi \in \XG$ accessible for $\ps$ if it is of infinity type $(-j,j)$ for some $j \in \cml$.
	\end{definition}

	
	
	\subsection{Distributions}\label{distributions}
We introduce the notion of growth which will be crucial in Proposition \ref{growth} for the uniqueness of our $p$-adic $L$-function.

For a finite $\Qp$-extension $L$, we let $\cA(\Zp,L)[r]$ be the space of locally analytic functions on $b+p^r\Zp$ for all $b \in \Zp$ and $\cA(\Zp,L)=\bigcup_{r}\cA(\Zp,L)[r].$ We denote by $\cD(\Zp,L)[r]$ the dual of $\cA(\Zp,L)[r]$ and by $\cD(\Zp,L)=\bigcap_{r}\cD(\Zp,L)[r]$ the dual of $\cA(\Zp,L).$

    The space of distributions $\cD(\Zp,L)[r]$ is naturally a Banach space and we denote its norm  by $\lvert\cdot\rvert_r.$

    \begin{definition}
        We say that a distribution $\mu \in \cD(\Zp,L)$ has growth $h \in \Q_{\geq0}$ if there exists a constant $C\geq0 $ such that for all $r \in \Z_{\geq 0}$ we have $\lvert\mu\rvert_r\leq Cp^{rh}.$
    \end{definition}

    We denote the space of distributions of growth $h$ by $\cD^h(\Zp, L),$
    
     $$ \cD(\Zp, L) \supset \cD^1(\Zp, L)\supset ...\supset \cD^h(\Zp, L)\supset...\supset \cM(\Zp, L),$$

    where $\cM(\Zp, L)=\cD^0(\Zp, L)$ is the space of measures.

	\section{\texorpdfstring{$p$}{p}-adic Interpolation of Branching Laws}\label{bl}
	
	This section aims to $p$-adically interpolate the branching law of Theorem \ref{brl}.
		\subsection{\texorpdfstring{$p$}{p}-adic interpolation}
 
 Take $\ml \in X(T)$ and $\mathcal{V}_{\ml}$ the algebraic 
	representation of $G$ of highest weight $\ml$. From now on, fix an extension $L$ of $\Qp$. 

 The following is known as the Borel--Weil--Bott Theorem \cite[Theorem 12.1.6]{symmetry}:
	\begin{theorem}
		$\mathcal{V}_{\ml}(L)$ can be explicitly described as the algebraic induction 
		\begin{align}
			\operatorname{Ind}_{{B}\left(\mathbf{Q}_{p}\right)}^{G\left(\mathbf{Q}_{p}\right)}& \ml (L)=\{ f: G\left(\mathbf{Q}_{p}\right) \rightarrow L : f \text{ is algebraic and }\nonumber\\
			&f\left(\overline{n} t g\right)=\ml(t) f(g) \quad \forall \overline{n}\in \overline{N} \left(\mathbf{Q}_{p}\right), t \in T\left(\mathbf{Q}_{p}\right), g \in G\left(\mathbf{Q}_{p}\right) \}. \nonumber
		\end{align}	
	\end{theorem}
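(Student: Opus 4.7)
The plan is to apply the classical Borel--Weil theorem; the Bott part, concerning higher cohomology, is not needed here since we are describing an $H^0$ for a dominant weight. My first step would be to reinterpret the right-hand side geometrically. Extending $\ml$ to a character of $\overline{B} = T\overline{N}$ by letting $\overline{N}$ act trivially, we form the $G$-equivariant line bundle $\mathcal{L}_{\ml} = G \times_{\overline{B}} L_{\ml}$ on the flag variety $G/\overline{B}$. Algebraic functions $f : G(\Qp) \to L$ satisfying the transformation rule $f(\overline{n}tg) = \ml(t) f(g)$ then correspond precisely to global sections in $H^0(G/\overline{B}, \mathcal{L}_{\ml})$, with $G(\Qp)$ acting by right translation.

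Second, I would establish nonvanishing and finite-dimensionality. Finite-dimensionality is immediate because $G/\overline{B}$ is a projective variety and $\mathcal{L}_{\ml}$ is coherent. To exhibit a nonzero section, I would use the Bruhat decomposition to isolate the big open cell $\overline{B}N \subset G$, on which unique factorisation $g = \overline{n}tn$ lets us define $f_{\ml}(\overline{n}tn) = \ml(t)$. The essential point is that the dominance of $\ml$ is precisely the condition ensuring that this formula extends to a regular section over all of $G/\overline{B}$. Concretely, because $p$ splits in $E$ we have $G(\Qp) = \GL_{n+1}(\Qp) \times \GL_n(\Qp)$ after base change, and one can write $f_{\ml}$ explicitly as a product of powers of principal minors of matrices in each factor, with exponents given by the coefficients of $\ml$ in the basis of fundamental weights.

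Third, I would verify that the section $f_{\ml}$ is a highest-weight vector of weight $\ml$ for the right-translation action of $G$, and then conclude via complete reducibility in characteristic zero together with the uniqueness of the irreducible representation of a given highest weight. A dimension count using the Weyl character formula, or equivalently Frobenius reciprocity combined with the fact that the $N$-invariants of $\mathcal{V}_{\ml}(L)$ form a one-dimensional weight-$\ml$ line, identifies the induced module with $\mathcal{V}_{\ml}(L)$.

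The main obstacle is the extension step: showing that $f_{\ml}$, defined naturally only on the big cell, extends to a regular global section over all of $G/\overline{B}$. This is the substantive analytic content of the Borel--Weil theorem and is exactly where the dominance hypothesis enters; for a non-dominant character the induction vanishes identically. Once this extension is secured, the remaining identifications are formal consequences of the representation theory of reductive groups in characteristic zero, so the result follows.
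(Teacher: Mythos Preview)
Your proof sketch is correct and follows the standard route to the Borel--Weil theorem: identify the algebraic induction with global sections of the associated line bundle on the flag variety, exhibit a nonzero highest-weight section on the big cell using dominance to control extension, and conclude by complete reducibility and uniqueness of highest-weight modules in characteristic zero.

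The paper, however, does not prove this statement at all. It simply records the result as the classical Borel--Weil--Bott theorem and cites \cite[Theorem 12.1.6]{symmetry} for it. So your proposal is not ``the same approach'' as the paper's --- you are supplying an argument where the paper only gives a reference. What your approach buys is self-containment; what the paper's approach buys is brevity, since the result is entirely classical and not the focus of the article.

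One small remark: you correctly note that only the Borel--Weil part (degree-zero cohomology for a dominant weight) is needed, so invoking the full Bott theorem (the computation of higher cohomology via the Weyl group dot-action) is unnecessary here. The paper's labelling of the statement as ``Borel--Weil--Bott'' is slightly generous in this respect.
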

	
	The action of $\gamma \in G\left(\mathbf{Q}_{p}\right)$ on $f \in \mathcal{V}_{\ml}(L)$ is by $(\gamma \cdot f)(g):=f(g \gamma)$.
	As $G\left(\mathbf{Z}_{p}\right)$ is Zariski-dense in $G\left(\mathbf{Q}_{p}\right)$, we can identify $\mathcal{V}_{\ml}(L)$ with the set of algebraic functions $f: G\left(\mathbf{Z}_{p}\right) \rightarrow$ $L$ satisfying 
	\begin{equation}
		f\left(\overline{n} t g\right)=\ml(t) f(g) \quad \forall \overline{n}\in \overline{N} \left(\mathbf{Z}_{p}\right), t \in T\left(\mathbf{Z}_{p}\right), g \in G\left(\mathbf{Z}_{p}\right). \label{a}
	\end{equation}
	
	Consider the following weights for $U_{n}$: $$\al_0=(0,0,...,0), \al_{1}=(1,0,...,0),...,\al_n=(1,...,1).$$

    Similarly, consider the following weights for $U_{n+1}$:

    $$\be_1=(1,0,...,0), \be_{2}=(1,1,0,...,0),...,\be_{n+1}=(1,...,1).$$

    The weights $(\be_1,\al_0), (\be_1,\al_1),..., (\be_n,\al_n), (\be_{n+1},\al_n)$ form a basis for the weight space of $G$ as any weight $\ml$ can be written as:
    \begin{equation}
    \begin{aligned}\label{interlace1}
    \ml=(\mu_1-\la_1)(\be_1,\al_0)+(\la_1-\mu_2)(\be_1,\al_1)+....+\\
    +(\mu_n-\la_n)(\be_n,\al_n)+\mu_{n+1}(\be_{n+1},\al_n).
    \end{aligned}
    \end{equation}

	We have that $\be_i$ interlaces $\al_i$ for all $1 \leq i \leq n$ and $\be_{i+1}$ interlaces $\al_{i}$ for all $0 \leq i \leq n$. Therefore, Theorem \ref{brl} for $H\subset G$ implies that the following subspaces contain the trivial representation with multiplicity one:
	
	\[\mathcal{V}_{(\be_{i+1}, \al_i)}\mid_H \text{ for } 0 \leq i \leq n\]
 and 
 \[\mathcal{V}_{(\be_{i}, \al_i)}\mid_H, \text{ for } 1 \leq i \leq n \]
	
	Hence, we have generators for the trivial representation, which we denote by

\[u_{i+1,i} \in \mathcal{V}_{(\be_{i+1}, \al_i)}\mid_H \text{ for } 0 \leq i \leq n\]
 and 
 \[v_{i,i} \in \mathcal{V}_{(\be_{i}, \al_i)}\mid_H, \text{ for } 1 \leq i \leq n .\]

	We view the elements $u_{i,i}, v_{i+1,i}$ as algebraic functions $G(\Zp) \to \Qp$.

    From now on, assume that $\mu$ interlaces $\lambda$.

	\begin{lemma}
		Let $(\mu, \la)$ be a dominant algebraic weight for $G$ such that $\mu^{\vee}$ interlaces $\la$. Then 	
$$u_{(\mu,\la)}\coloneqq \prod_{i=0}^{n-1}u_{i+1,i}^{(\mu_i-\la_i)}  \prod_{i=1}^{n}v_{i,i}^{(\la_i-\mu_{i+1})} \cdot u_{n+1,n}^{\mu_{n+1}}$$
is a generator for the trivial representation inside $\mathcal{V}_{(\mu,\la)}(\Qp)\mid_{H(\Zp)}.$
		\end{lemma}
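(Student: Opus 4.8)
The plan is to reduce the statement to the explicit description of each factor as a highest/lowest weight vector and then verify that the displayed monomial lands in the correct weight space and is $H(\Zp)$-invariant. First I would recall from the Borel--Weil--Bott description that $\mathcal{V}_{(\mu,\la)}(\Qp)$ is the space of algebraic functions $f$ on $G(\Zp)$ satisfying $f(\overline{n}tg)=(\mu,\la)(t)f(g)$, and that multiplication of such functions is compatible with addition of weights: if $f_1 \in \mathcal{V}_{\nu_1}$ and $f_2 \in \mathcal{V}_{\nu_2}$ then $f_1 f_2 \in \mathcal{V}_{\nu_1+\nu_2}$, because $(f_1 f_2)(\overline{n}tg) = \nu_1(t)\nu_2(t) f_1(g) f_2(g) = (\nu_1+\nu_2)(t)(f_1 f_2)(g)$. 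Hence the product $u_{(\mu,\la)} = \prod_{i=0}^{n-1} u_{i+1,i}^{(\mu_i-\la_i)} \prod_{i=1}^{n} v_{i,i}^{(\la_i-\mu_{i+1})} \cdot u_{n+1,n}^{\mu_{n+1}}$ lies in $\mathcal{V}_{\nu}(\Qp)$ where $\nu$ is the corresponding $\Z_{\geq 0}$-combination of the weights $(\be_{i+1},\al_i)$ and $(\be_i,\al_i)$. I would then invoke equation \eqref{interlace1}, which is precisely the identity expressing $(\mu,\la)$ as that combination, with all coefficients $\mu_i-\la_i$, $\la_i-\mu_{i+1}$, $\mu_{n+1}$ nonnegative exactly because $\mu$ interlaces $\la$ (so the monomial genuinely makes sense as a product of nonnegative powers and lies in $\mathcal{V}_{(\mu,\la)}$).

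Next I would handle the $H(\Zp)$-invariance. Each $u_{i+1,i}$ (resp. $v_{i,i}$) is, by construction, a generator of the one-dimensional trivial $H$-subrepresentation of $\mathcal{V}_{(\be_{i+1},\al_i)}\mid_H$ (resp. $\mathcal{V}_{(\be_i,\al_i)}\mid_H$), so it is fixed by $H(\Zp)$: $(h \cdot u_{i+1,i})(g) = u_{i+1,i}(gh) = u_{i+1,i}(g)$ for all $h \in H(\Zp)$, and similarly for $v_{i,i}$. Since the $G(\Zp)$-action on $\mathcal{V}_{(\mu,\la)}(\Qp)$ is by right translation, it is multiplicative on products of functions: $(h\cdot (f_1 f_2))(g) = (f_1 f_2)(gh) = f_1(gh) f_2(gh) = (h\cdot f_1)(g)\,(h\cdot f_2)(g)$. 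Therefore $u_{(\mu,\la)}$, being a product of $H(\Zp)$-fixed functions, is itself $H(\Zp)$-fixed, i.e. it spans a copy of the trivial representation. Finally, to see it is a \emph{generator} (nonzero), I would check that the product of lowest-weight-type vectors is nonzero: evaluating at the identity, or noting that each factor is a nonvanishing function on the big cell and that the product of functions nonzero on a common dense open is nonzero, shows $u_{(\mu,\la)} \neq 0$; combined with Theorem \ref{brl} which guarantees the space of $H$-invariants in $\mathcal{V}_{(\mu,\la)}\mid_H$ is at most one-dimensional when $\mu$ interlaces $\la$, a nonzero invariant vector is automatically a generator.

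The main obstacle I anticipate is purely bookkeeping rather than conceptual: one must be careful that the exponents in the monomial match equation \eqref{interlace1} exactly, including the slightly asymmetric roles of the index ranges ($i$ from $0$ to $n-1$ for the $u$'s versus $i$ from $1$ to $n$ for the $v$'s, plus the separate $u_{n+1,n}^{\mu_{n+1}}$ term), and that the hypothesis stated as ``$\mu^{\vee}$ interlaces $\la$'' in the lemma is reconciled with the running assumption ``$\mu$ interlaces $\la$'' just above it — I would make explicit which normalisation of interlacing is in force so that all the exponents $\mu_i - \la_i$ etc.\ are genuinely $\geq 0$. Once the indexing is pinned down, the proof is a two-line combination of: (i) weights add under multiplication of functions, (ii) $H(\Zp)$-fixed vectors are closed under products, and (iii) nonvanishing of a product of nonvanishing functions, together with the multiplicity-one statement of Theorem \ref{brl}.
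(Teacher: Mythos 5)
Your proposal is correct and takes essentially the same route as the paper: nonnegativity of the exponents from the interlacing hypothesis, the transformation law $u_{(\mu,\la)}(\overline{n}tg)=\ml(t)u_{(\mu,\la)}(g)$ read off from \eqref{interlace1}, and $H(\Zp)$-invariance inherited multiplicatively from the invariance of each factor. The one small slip is the parenthetical suggestion to evaluate at the identity, which need not detect nonvanishing since the $H$-invariant factors are supported on the open orbit $\overline{B}g_0H(\Zp)$ and may vanish at $1$; your fallback argument (a product of functions each nonvanishing on a common dense open is nonzero) together with the multiplicity-one statement of Theorem \ref{brl} correctly supplies the ``generator'' claim, which the paper's own proof leaves implicit.
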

	
	\begin{proof} 
     Firstly, we note that since $\mu$ interlaces $\la$, we have $\mu_i-\la_i\geq 0$ and $\la_i-\mu_{i+1}\geq 0,$ for all $i$. Therefore, each function in the product is algebraic and so $u_{(\mu,\la)}$ is also algebraic. 

    By \eqref{interlace1}, if $t \in T(\Zp)$ we have:
        \begin{align*}            
        \ml(t)=(\be_1,\al_0)(t)^{(\mu_1-\la_1)} (\be_1,\al_1)(t)^{(\la_1-\mu_2)}\cdot ... \cdot\\
    (\be_n,\al_n)(t)^{(\mu_n-\la_n)} \cdot (\be_{n+1},\al_n)(t)^{\mu_{n+1}}.\end{align*}
        
   Taking $\bar{n} \in \bar{N}(\Zp)$ and $g \in G(\Zp)$, we get $u_{(\mu,\la)}(\bar{n}tg)=\ml(t)u_{(\mu,\la)}(g)$ and so $u_{(\mu,\la)} \in \mathcal{V}_{(\mu,\la)}\mid_{H(\Zp)}.$

   The claim follows by noting that $H$ acts trivially on each $u_{i,i}, v_{i+1,i}$ and therefore acts trivially on $u_{(\mu,\la)}.$
	\end{proof}
	
	It is now clear that if $j \in \cml$, then if we write $\la+j$ for $(\la_1+j,...,\la_n+j)$ then:
\begin{align*}
   u_{(\mu,\la+j)}= \prod_{i=0}^{n-1}u_{i+1,i}^{(\mu_i-\la_i-j)}  \prod_{i=1}^{n}v_{i,i}^{(\la_i+j-\mu_{i+1})} \cdot u_{n+1,n}^{\mu_{n+1}} 
   \end{align*}

is a generator for the trivial representation inside $\mathcal{V}_{(\mu,\la+j)}(\Qp)\mid_{H(\Zp)}.$
	
	\subsection{Support Conditions}
	
	For an $\left(n+1\right) \times \left(n+1\right)$-matrix $g$, we denote by $\left[g\right]_n$ its top left $n \times n$ entries.
	
	Let $$N^1(\Zp)=\{(g, g') \in N(\Zp) : \left[g\right]_n=g'\text{ and } (g, g')\equiv  \left(\begin{psmallmatrix}
		1 & 0 & \cdots &0 &1 \\
		0 & 1 & \cdots &0 &1 \\
		\vdots  & \vdots  &  & \ddots &\vdots \\
		0 & 0 & \cdots & 0 &1 \\
	\end{psmallmatrix} , 1_n \right)\mod p 
	\}$$
	
	and for an integer $\beta>0$
	
	$$N^{\beta}(\Zp)=\Bigl\{(g, g') \in N(\Zp) : \left[g\right]_n=g'\text{ and } (g, g')\equiv  \left(\begin{psmallmatrix}
		1 & 0 & \cdots &0 &1 \\
		0 & 1 & \cdots &0 &1 \\
		\vdots  & \vdots  &  & \ddots &\vdots \\
		0 & 0 & \cdots & 0 &1 \\
	\end{psmallmatrix} , 1_n \right)\mod p^{\beta} 
	\Bigr\}.$$
	
	For convenience, set $g_0=\begin{psmallmatrix}
		1 & 0 & \cdots &0 &1 \\
		0 & 1 & \cdots &0 &1 \\
		\vdots  & \vdots  &  & \ddots &\vdots \\
		0 & 0 & \cdots & 0 &1 \\
	\end{psmallmatrix} .$
	
	The aim of this section is to prove the following:
	
	\begin{lemma}\label{1+pzp}
		For all $(v,u)\in \mathcal{V}_{\ml}\mid_{H(\Zp)}$, we have that up to rescaling $(v,u) \big(N^1(\Zp)\big) \subset \Zp^{\times}$.
	\end{lemma}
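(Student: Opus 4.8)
The statement is about the top-left-$n\times n$ restriction of unipotent matrices in $N(\Zp)$, and the value on the distinguished generators $u_{i,i}, v_{i+1,i}$ (equivalently on products $u_{(\mu,\la)}$). Since $\mathcal V_{\ml}\mid_{H(\Zp)}$ is spanned as an algebra-over-$\Zp$-ish gadget by these basic interlacing vectors — more precisely any $(v,u)$ can be built from the $u_{i+1,i}$ and $v_{i,i}$ via the weight decomposition \eqref{interlace1} — it suffices to prove the claim for each basic generator and then observe that products and $\Zp^\times$-linear combinations of functions taking values in $\Zp^\times$ on $N^1(\Zp)$ stay in $\Zp^\times$ after rescaling (a sum of units is a unit only if there is no cancellation mod $p$; this is where one must be slightly careful, see below). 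So first I would reduce to showing: for each basic generator $\Phi \in \{u_{i+1,i}, v_{i,i}\}$, suitably normalised, $\Phi(g) \in \Zp^\times$ for all $g \in N^1(\Zp)$.

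**Key steps.** (1) Recall the Borel--Weil--Bott model: $\Phi$ is an algebraic function on $G(\Zp)$ with $\Phi(\bar n t g)=\nu(t)\Phi(g)$ for the relevant fundamental-type weight $\nu=(\be_{i+1},\al_i)$ or $(\be_i,\al_i)$, and it generates the trivial $H$-rep, so $\Phi(gh)=\Phi(g)$ for $h\in H(\Zp)$. (2) Use the explicit formula for the highest weight vector: for a weight of the form $\be_k = (1,\dots,1,0,\dots,0)$ ($k$ ones), the corresponding function on $GL_{n+1}$ is (up to normalisation) a $k\times k$ minor — concretely the determinant of the bottom-left (or appropriately chosen) $k\times k$ block, by the classical realization of fundamental representations as exterior powers; similarly for the $U_n$-component and $\al_i$. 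The $H$-invariance pins down \emph{which} minor: the invariant vector $u_{i+1,i}\in\mathcal V_{(\be_{i+1},\al_i)}\mid_H$ is, up to scalar, the minor of $g\in GL_{n+1}$ built from the last $i+1$ rows and the columns $\{1,\dots,i,n+1\}$ (so that the diagonal embedding $\iota$ glues it to the matching minor of $g'=[g]_n$ and the two cancel); I would make this pairing precise from the shape of $g_0$ — the all-ones last column is exactly what makes the chosen $(i+1)\times(i+1)$ minor of $g_0$ equal to $\pm1$. (3) Now evaluate: for $g\in N^1(\Zp)$ we have $g\equiv g_0\pmod p$ and $g'\equiv 1_n\pmod p$, so each basic generator, being (a normalisation of) one of these minors, reduces mod $p$ to the corresponding minor of $(g_0,1_n)$, which is $\pm1$ — hence lies in $\Zp^\times$. (4) Finally, for a general $(v,u)$: decompose it into weight components; each weight component is, after the rescaling allowed in the statement, a monomial in the basic generators (by the uniqueness/one-dimensionality of the interlacing Hom-spaces in Theorem \ref{brl}, each graded piece is rank one), hence a product of units, hence a unit mod $p$; and there is a single ``leading'' weight whose coefficient we normalise to $1$, so after rescaling $(v,u)$ itself is $\equiv$ a nonzero element of $\Fp$ on $N^1(\Zp)$, i.e.\ lands in $\Zp^\times$.

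**The main obstacle.** The genuine content is step (2)–(3): identifying precisely which minor each $H$-invariant vector $u_{i+1,i}$, $v_{i,i}$ is, and checking that on $g_0$ that minor is a \emph{unit} (indeed $\pm1$) rather than something divisible by $p$. The all-ones last column in the definition of $N^1(\Zp)$ is clearly engineered for exactly this, but making the bookkeeping of rows/columns match the diagonal embedding $\iota$ and the weights $\be_{i+1}$ vs $\al_i$ is the fiddly part; I expect to do this by an explicit small-rank check plus an inductive/block argument. A secondary subtlety is the ``up to rescaling'' in the general-$(v,u)$ case: one must ensure that when several weight components contribute, they do not cancel mod $p$ — this is handled by rescaling so that a single maximal-weight monomial has coefficient a unit and noting the strictly higher $p$-adic valuation (or distinct reduction) of the others, but if genuine cancellation can occur one instead argues component-by-component, using that the statement only asserts membership in $\Zp^\times$ after an overall scaling and that each graded piece is individually handled by steps (1)–(3).
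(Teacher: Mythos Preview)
Your proposal could be made to work, but it is both more laborious than the paper's argument and contains an unnecessary (and slightly confused) step. First, your step (4) is superfluous: by Theorem \ref{brl} the space of $H$-invariants in $\mathcal{V}_{\ml}$ is one-dimensional, so any $(v,u)$ is already a scalar multiple of $u_{(\mu,\la)}$; there is no ``decomposition into weight components'' and hence no cancellation issue to manage.

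More importantly, the paper sidesteps your self-identified ``main obstacle'' entirely. Instead of pinning down each $u_{i+1,i}$, $v_{i,i}$ as an explicit minor and then evaluating that minor at $(g_0,1_n)$, the paper proves a single group-theoretic factorisation: every $(g,g')\in N^1(\Zp)$ can be written as $(\bar b,\bar b')\cdot(g_0,1_n)\cdot(h,[h]_n)$ with $(\bar b,\bar b')\in\overline B(\Zp)$ whose diagonal entries lie in $1+p\Zp$, and $(h,[h]_n)\in H(\Zp)$. This is a short explicit matrix manipulation. For any $H$-invariant $(v,u)$ one then has immediately
\[
(v,u)(g,g') = \ml\bigl(\mathrm{diag}(\bar b,\bar b')\bigr)\cdot (v,u)(g_0,1_n) \in \Zp^\times\cdot (v,u)(g_0,1_n),
\]
using only the left $\overline B$-equivariance from the Borel--Weil model and the right $H$-invariance. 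Nonvanishing of $(v,u)(g_0,1_n)$ follows because $\overline B\,(g_0,1_n)\,H$ is Zariski open and dense in $G$, so an algebraic function vanishing there vanishes identically; one then rescales to make that value equal to $1$.

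In short, your congruence idea (reduce values mod $p$ to the value at $(g_0,1_n)$) is sound once one knows that value is a unit, but the paper's exact factorisation gives this directly and uniformly for every $H$-invariant vector, with no explicit highest-weight bookkeeping. What each route buys: yours would produce concrete formulas for the invariant vectors as minors, potentially useful elsewhere; the paper's avoids all of that computation at the cost of a single matrix identity.
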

	\begin{proof}
		Let $(g,A) \in N^1(\Zp)$, then $g$ is of the form 
		$$g=
		\begin{pmatrix}
			&	\bigA &	&  &\begin{matrix} 1+a_1p \\ 1+a_2p \\ \vdots \end{matrix} \\ 
			0 & \cdots & &0 &1 \\
		\end{pmatrix} $$
		
		where $A$ is in the unipotent radical of $H$, such that $A \equiv 1_n \mod p $ and $a_i \in \Zp.$ 
		
		It is now easy to see that 
		$$g= \begin{psmallmatrix}
			1 & 0 & \cdots &0 &1+a_1p \\
			0 & 1 & \cdots &0 &1+a_2p \\
			\vdots  & \vdots  &  & \ddots &\vdots \\
			0 & 0 & \cdots & 0 &1 \\
		\end{psmallmatrix} 
		\begin{pmatrix}
			& \bigA &	&  &\begin{matrix} 0 \\ 0 \\ \vdots \end{matrix} \\ 
			0 & \cdots && 0 &1 \\
		\end{pmatrix}$$
		
		and that
		
		$$
		\begin{psmallmatrix}
			1 & 0 & \cdots &0 &1+a_1p \\
			0 & 1 & \cdots &0 &1+a_2p \\
			\vdots  & \vdots  &  & \ddots &\vdots \\
			0 & 0 & \cdots & 0 &1 \\
		\end{psmallmatrix}=
		\begin{psmallmatrix}
			1+a_1p & 0 & \cdots &0 &0 \\
			a_2p & 1 & \cdots &0 &0 \\
			\vdots  & \vdots  &   \ddots &\vdots &\vdots \\
			a_{n}p & 0 & \cdots & 1 &0 \\
			0 & 0 & \cdots & 0 &1 \\
		\end{psmallmatrix}
		\begin{psmallmatrix}
			1 & 0 & \cdots &0 &1\\
			0 & 1 & \cdots &0 &1 \\
			\vdots  & \vdots  &  & \ddots &\vdots \\
			0 & 0 & \cdots & 0 &1 \\
		\end{psmallmatrix}
		\begin{psmallmatrix}
			\frac{1}{1+a_1p} & 0 & \cdots &0 &0 \\
			\frac{-a_2p}{1+a_1p} & 1 & \cdots &0 &0 \\
			\vdots  & \vdots  &   \ddots &\vdots &\vdots \\
			\frac{-a_{n}p}{1+a_1p} & 0 & \cdots & 1 &0 \\
			0 & 0 & \cdots & 0 &1 \\
		\end{psmallmatrix} .
		$$
		
		Therefore, we have that
		
		$ g= \overline{b} \begin{psmallmatrix}
			1 & 0 & \cdots &0 &1 \\
			0 & 1 & \cdots &0 &1\\
			\vdots  & \vdots  &  \ddots &\vdots &\vdots \\
			0 & 0 & \cdots & 0 &1 \\
		\end{psmallmatrix} h=\overline{b}g_0h $ and $g'=  \overline{b}'1_n[h]_n,$ for $\left(\overline{b},\overline{b}'\right) \in \overline{B}(\ \Zp), \left(h,[h]_n\right) \in H(\ \Zp).$ 	
		
		Since $\left(v,u\right)\left(\left(\overline{b},\overline{b}'\right)\left(g_0,1_n\right)\left(h,[h]_n\right)\right)=\mu(\overline{b})\la(\overline{b}') \left(v,u\right)\left(g_0,1_n\right) \in \Zp^{\times}\left(v,u\right)\left(g_0,1_n\right)$ and $\overline{B}\left(g_0,1_n\right)H(\Zp) $ is open and dense in $G(\Zp),$ we can rescale by an element of $\Qp^{\times}$ to get $ \left(v,u\right)\left(g_0,1_n\right)=1$. The claim now follows.
		
	\end{proof}

	\subsection{The Interpolation Theorem}

	Consider an affinoid $\Omega$ in the weight space of $G$ around $\ml$. We allow $\Omega= \{\ml\}$. Such an affinoid is equipped with a character $\chi_\Omega=(\chi_{\Omega_1}, ..., \chi_{\Omega_{2n+1}}) :T(\Zp) \to \cO^{\times}_\Omega$ such that when composed with the evaluation at $\ml$, it equals $\ml$, i.e. $sp_{\ml} \circ \chi_\Omega= \ml$.

	\begin{definition}Let $\mathcal{A}\left(\operatorname{Iw}_{G}, \mathcal{O}_{\Omega}\right)$ be the space of locally analytic functions $f : \operatorname{Iw}_{G} \to \mathcal{O}_{\Omega}.$
		Define $\mathcal{A}_{\Omega}:= \{f \in \mathcal{A}\left(\operatorname{Iw}_{G}, \mathcal{O}_{\Omega}\right) :f\left(\overline{n} t g\right)=\chi_{\Omega}(t) f(g) \text { for all } \overline{n} \in \overline{N}\left(\mathbf{Z}_{p}\right) \cap \operatorname{Iw}_{G}, t \in T\left(\mathbf{Z}_{p}\right), g \in \mathrm{Iw}_{G} \}.$

	\end{definition}

	By \eqref{a} and the Iwahori decomposition, we can reduce the domain of functions in $\mathcal{A}_{\Omega}$ and simply consider functions in $\mathcal{A}\left(N\left(\mathbf{Z}_{p}\right), \mathcal{O}_{\Omega}\right)$. 
	
	Moreover, by dualising the above definition we get the following distribution spaces:

	Define $ \cD_{\Omega}=\operatorname{Hom_{cts}}\left( \cA_{\Omega},\cO_\Omega\right)$ and $\cD\left(\Zp^{\times}, \cO_\Omega\right)= \operatorname{Hom_{cts}}\left(\cA(\Zp^{\times}, \cO_\Omega),\cO_\Omega\right).$

	We will now define the spaces that will be used for the $p$-adic interpolation of the Branching Law (Theorem \ref{brl}).
 
 Let $
	\operatorname{Iw}_{G}^{1}:=\overline{N}\left(p \mathbf{Z}_{p}\right) \cdot T\left(\mathbf{Z}_{p}\right) \cdot N^{1}\left(\mathbf{Z}_{p}\right) \subset \operatorname{Iw}_{G},
	$ and $
	\operatorname{Iw}_{H}^{1}:=H\left(\mathbf{Z}_{p}\right) \cap  \operatorname{Iw}_{G}^{1}.$
	
	Similarly, $
	\operatorname{Iw}_{G}^{\beta}:=\overline{N}\left(p \mathbf{Z}_{p}\right) \cdot T\left(\mathbf{Z}_{p}\right) \cdot N^{\beta}\left(\mathbf{Z}_{p}\right).$ 
	
	\begin{definition}
		We say that $\phi \in \mathcal{A}_{\Omega}$  has support on $\operatorname{Iw}_{G}^{\beta}$ if $\phi(g)=0$ for $g \notin \operatorname{Iw}_{G}^{\beta}$ . Let $\mathcal{A}_{\Omega}^{\beta} \subset \mathcal{A}_{\Omega}$  be the subspace of functions supported on $\operatorname{Iw}_{G}^{\beta}$. Similarly, define $\mathcal{D}_{\Omega}^{\beta} \subset \mathcal{D}_{\Omega}$ as the space of distributions $\mu \in \mathcal{D}_{\Omega}$ such that $\mu(\phi)=\mu\left( \phi\mid_{Iw_{G}^{\beta}}\right)$ for all $\phi \in \mathcal{A}_{\Omega}$.
	\end{definition}
	
	By Lemma \ref{1+pzp}, the following map is well defined:
	\begin{align*}
		u_\Omega : &N^1(\Zp) \to \cO^{\times}_\Omega  \nonumber \\
		g& \mapsto \prod_{i=0}^{n-1}u_{i+1,i}(g)^{(\chi_{\Omega_i}-\chi_{\Omega_{n+1+i}})}  \prod_{i=1}^{n}v_{i,i}(g)^{(\chi_{\Omega_{n+1+i}}-\chi_{\Omega_{i}})} \cdot u_{n+1,n}(g)^{\chi_{\Omega_{n+1}}} .
	\end{align*}

	By \eqref{a}, we can extend this map to $u_\Omega : \operatorname{Iw}_{G}^1(\Zp) \to \cO_\Omega$ and we get a function in $\cA_{\Omega}$ with support on $\operatorname{Iw}_{G}^1(\Zp).$

	Therefore, the following is well defined:
	
	\begin{definition}
		Define the map $u_{\Omega} \in \cA_{\Omega}$, where if $g \in \operatorname{Iw}_{G}^1(\Zp)$
		
		\begin{align*}
		    u_\Omega(g)=\prod_{i=0}^{n-1}u_{i+1,i}(g)^{(\chi_{\Omega_i}-\chi_{\Omega_{n+1+i}})}  \prod_{i=1}^{n}v_{i,i}(g)^{(\chi_{\Omega_{n+1+i}}-\chi_{\Omega_{i}})} \cdot u_{n+1,n}(g)^{\chi_{\Omega_{n+1}}} 
		\end{align*}

		and $u_\Omega(g)=0$ otherwise.
		
	\end{definition}
 
	We can therefore define the map $\tilde{u}_{\Omega} : \cA(\Zp^{\times}, \cO_\Omega) \to \cA^1_{\Omega}$ by setting
	
	\begin{align*}
		\tilde{u}_\Omega(f)(g)=&\prod_{i=0}^{n-1}u_{i+1,i}(g)^{(\chi_{\Omega_i}-\chi_{\Omega_{n+1+i}})} f\left(\frac{1}{u_{i+1,i}(g)}\right)\\
        &\prod_{i=1}^{n}v_{i,i}(g)^{(\chi_{\Omega_{n+1+i}}-\chi_{\Omega_{i}})} f\left(v_{i,i}(g)\right)\cdot u_{n+1,n}(g)^{\chi_{\Omega_{n+1}}} 
	\end{align*}

	Note that the motivation behind this definition is that 
 
 $$\tilde{u}_{\ml}\left(z \mapsto z^j \right)(g)=u_{(\mu,\la+j)}(g).$$
	
	We thus get a map 
	$$\tilde{u}_{\Omega} : \cA(\Zp^{\times}, \cO_\Omega) \to \cA^1_{\Omega}.$$

	Dualising $\tilde{u}_\Omega $, we get a map 
	$$ \kappa_\Omega : \cD^1_{\Omega} \to \cD(\Zp^{\times}, \cO_\Omega),$$
	
	where $ \cD^1_{\Omega}$ is the dual of $ \cA^1_{\Omega}$ and $\cD(\Zp^{\times}, \cO_\Omega)$ is the dual of $\cA(\Zp^{\times}, \cO_\Omega)$.

	\begin{proposition} \label{blprop}
		For each classical $\ml \in \Omega$ such that $\mu^{\vee}$ interlaces $\la$ and each $j \in \cml$ the following diagram commutes:
		\begin{center}
			\begin{tikzcd}
				
				\cD_\Omega^1 \arrow{d}{sp_{\ml}} \arrow{r}{\kappa_\Omega}
				& \cD(\Zp^{\times}, \cO_\Omega) \arrow{d}{sp_{\ml}} \\
				\cD_{\ml}^1(L) \arrow{d}{\tilde{\iota}} \arrow{r}{\kappa_{\ml}}	& \cD(\Zp^{\times},L) \arrow{d}{\xi \mapsto \xi(z^j)} \\
				\mathcal{V}_{\ml}^{\vee}(L) \arrow{r}{ev_{u_{(\mu,\la+j)}}}	& L
			\end{tikzcd}
			
		\end{center}
		where $\tilde{\iota} : \cD_{\ml}(L) \to \mathcal{V}_{\ml}^{\vee}(L)$ is induced by the inclusion $\iota : 	\mathcal{V}_{\ml}(L) \hookrightarrow \cA_{\ml}(L).$
	\end{proposition}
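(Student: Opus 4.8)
The strategy is to dualise. Every arrow in the diagram is, up to base change, the continuous ($\cO_\Omega$- or $L$-linear) dual of an explicit map between spaces of analytic functions: $\kappa_\Omega$ and $\kappa_{\ml}$ are the duals of $\tilde u_\Omega$ and $\tilde u_{\ml}$; the map $\tilde\iota$ is dual to the inclusion $\iota:\mathcal{V}_{\ml}(L)\hookrightarrow\cA_{\ml}(L)$ followed by truncation onto $\operatorname{Iw}_G^{1}$; the evaluation $\xi\mapsto\xi(z^{j})$ is dual to $L\to\cA(\Zp^\times,L)$, $1\mapsto(z\mapsto z^{j})$; the functional $ev_{u_{(\mu,\la+j)}}$ is dual to $L\to\mathcal{V}_{(\mu,\la+j)}(L)$, $1\mapsto u_{(\mu,\la+j)}$ — the bottom row being read at the weight $(\mu,\la+j)$, equivalently at $(\mu,\la)$ twisted by the algebraic avatar of the infinity type $(-j,j)$, which is exactly what makes its composition with $\tilde\iota$ well defined and which is legitimate because $j\in\cml$ forces $\mu$ to interlace $\la+j$, so that $u_{(\mu,\la+j)}$ is the corresponding branching vector; finally the two vertical $sp_{\ml}$'s are base change along $\ml:\cO_\Omega\to L$, which on the function side are the specialisation maps $\cA_\Omega^{1}\to\cA_{\ml}^{1}(L)$ and $\cA(\Zp^\times,\cO_\Omega)\to\cA(\Zp^\times,L)$. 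It therefore suffices to establish commutativity of the two squares of the resulting predual diagram of function spaces.

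First I would handle the upper square, whose content is the single identity $sp_{\ml}\circ\tilde u_\Omega=\tilde u_{\ml}\circ sp_{\ml}$, i.e. that $\tilde u_\Omega$ base changes to $\tilde u_{\ml}$ along evaluation at $\ml$. This is read off the formula defining $\tilde u_\Omega$: specialising replaces each $\cO_\Omega$-coefficient by its value at $\ml$, replaces $f$ by $sp_{\ml}(f)$, and replaces each exponent $\chi_{\Omega_i}$ by the corresponding component of $\ml$, using $sp_{\ml}\circ\chi_\Omega=\ml$; the result is precisely the formula defining $\tilde u_{\ml}$. The one point that genuinely needs checking is that the ``character power'' $x\mapsto x^{\chi_{\Omega_i}}$, well defined on the values $u_{i+1,i}(g),v_{i,i}(g)\in\Zp^\times$ by Lemma \ref{1+pzp}, is analytic in $\Omega$ and compatible with specialisation — the standard interpolation of $(x,k)\mapsto x^{k}$ over the weight space — together with continuity of $\tilde u_\Omega$. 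Since $sp_{\ml}$ on distributions is base change along $\ml$ and $\kappa_\bullet$ is composition with $\tilde u_\bullet$, this gives $sp_{\ml}\circ\kappa_\Omega=\kappa_{\ml}\circ sp_{\ml}$, which is the upper square.

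Next I would handle the lower square, whose content is the identity $\tilde u_{\ml}(z\mapsto z^{j})=\iota(u_{(\mu,\la+j)})$ — exactly the compatibility recorded just after the definition of $\tilde u_\Omega$. One checks it by substituting $f(z)=z^{j}$ into the formula for $\tilde u_{\ml}$: the factors $f(1/u_{i+1,i}(g))=u_{i+1,i}(g)^{-j}$ and $f(v_{i,i}(g))=v_{i,i}(g)^{j}$ combine with the now-integral powers $u_{i+1,i}(g)^{\mu_i-\la_i}$, $v_{i,i}(g)^{\la_i-\mu_{i+1}}$ and $u_{n+1,n}(g)^{\mu_{n+1}}$ to reproduce $u_{(\mu,\la+j)}(g)$ on $N^{1}(\Zp)$, and both sides are then understood as the unique element of $\cA_{\ml}^{1}(L)$ restricting to this function on $N^{1}(\Zp)$. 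Dualising, for $\xi\in\cD_{\ml}^{1}(L)$ this gives $\kappa_{\ml}(\xi)(z^{j})=\xi\big(\tilde u_{\ml}(z\mapsto z^{j})\big)=\xi\big(\iota(u_{(\mu,\la+j)})\big)=\tilde\iota(\xi)\big(u_{(\mu,\la+j)}\big)=ev_{u_{(\mu,\la+j)}}\big(\tilde\iota(\xi)\big)$, which is the commutativity of the lower square and completes the proof.

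The hard part is bookkeeping rather than anything conceptual: one has to be careful that $\tilde u_\Omega$, the inclusion $\iota$, the truncation onto $\operatorname{Iw}_G^{1}$ and the distribution spaces $\cD_\Omega^{1}$, $\cD_{\ml}^{1}$ all interact correctly with the support conditions of Section \ref{bl} — that $\tilde u_{\ml}(z\mapsto z^{j})$ genuinely lies in $\cA_{\ml}^{1}(L)$, that distributions in $\cD^{1}$ only see the restriction of a function to $N^{1}(\Zp)$, and that the apparent weight mismatch between the rows $\cD^{1}_\bullet$ (at weight $(\mu,\la)$) and the twisted branching vector $u_{(\mu,\la+j)}$ in the bottom row is harmless, since on $N^{1}(\Zp)$ there is no torus action and $u_{(\mu,\la+j)}$ is an honest polynomial function there irrespective of which equivariant extension is taken. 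Granting the foundational material on distributions and on the support conditions, both squares then reduce, as indicated above, to the same explicit formula read in two directions.
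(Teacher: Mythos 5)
Your proposal is correct and takes essentially the same route as the paper: you reduce both squares to identities on the predual function spaces, the top square to $sp_{\ml}\circ\chi_\Omega=\ml$ and the bottom square to the identity $\tilde u_{\ml}(z\mapsto z^j)=u_{(\mu,\la+j)}$ on the support region $\operatorname{Iw}_G^1$. The paper's proof is terser but rests on exactly these two facts together with the same support-condition bookkeeping (the chain $\int_{\operatorname{Iw}_G}=\int_{\operatorname{Iw}_G^1}$ for distributions in $\cD^1$), so there is no genuine difference.
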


	\begin{proof}
		
		The top square commutes since  $sp_{\ml} \circ \chi_\Omega= \ml$. For the bottom square,  first we observe that $j \in \cml$, $ev_{u_{(\mlj)}} \in \operatorname{Hom}_H\left(\mathcal{V}_{\mlj}^{\vee},L\right)=\operatorname{Hom}_H\left(\mathcal{V}_{\ml}^{\vee},\mathcal{V}_{j}\right)$ and $\mathcal{V}_{j}$ is the highest weight $H$-representation of weight $(j,...,j)$ which is one-dimensional. We can now see that the bottom square is well defined and we have
		$$
		\begin{aligned}
			\kappa_{\ml}(\xi)\left(z \mapsto z^j\right) &=\int_{\Zp^{\times}} z^j \cdot d \kappa_{\ml}(\xi) \\
            &=\int_{\operatorname{Iw}_{G}} \tilde{v}_{\ml}\left(z^j\right) \cdot d \xi \\ &=\int_{\operatorname{Iw}_{G}^{1}} \tilde{v}_{\ml}\left(z^j\right) \cdot d \xi\\
			&=\int_{\operatorname{Iw}_{G}^{1}} u_{(\mu,\la+j)} \cdot \xi \\
                &=\int_{\operatorname{Iw}_{G}} u_{(\mu,\la+j)} \cdot \xi \\
                &=\left(ev_{u_{(\mu,\la+j)}} \circ \tilde{\iota}\right)(\xi),  
		\end{aligned}
		$$
		
		since $\tilde{u}_{\ml}\left(z \mapsto z^j \right)(g)=u_{(\mu,\la+j)}(g).$ Therefore, the bottom square commutes as well. 
		
	\end{proof}
	
	Under the isomorphism $\Zp^{\times} \cong \Gac,$ we have that for a Dirichlet character $\chi$ and $j \in \cml$, the character $x^j\chi(x) \in \cA(\Zp^{\times}, L)$ corresponds to an accessible character $\widetilde{\chi_j} \in \cA(\Gac, L)$ of infinity type $(j,-j)$ and vice versa. Moreover, $\kappa_{\ml}(\xi) \in \cD(\Zp^{\times}, L)$ corresponds to $\widetilde{\kappa_{\ml}(\xi)} \in \cD(\Gac, L)=\operatorname{Hom_{cts}}\left( \cA(\Gac,L),L\right).$ We thus have
	
	\begin{equation}
		\int_{\Zp^{\times}}x^j\chi(x) d\kappa_{\ml}(\xi) = \int_{\Gamma^{\text{ac}}} \widetilde{\chi_j} d\widetilde{\kappa_{\ml}(\xi)}. 		\label{integral}
	\end{equation}
	
	Similarly, $\kappa_{\Omega}(\xi) \in \cD(\Zp^{\times}, \cO_\Omega)$ corresponds to $$\widetilde{\kappa_{\Omega}(\xi)} \in \cD(\Gac, \cO_\Omega)=\operatorname{Hom_{cts}}\left( \cA(\Gac,\cO_\Omega),\cO_\Omega\right).$$
	
	The following corollary now follows:
	
	\begin{corollary}\label{blcor}
		For each classical $\ml \in \Omega$ such that $\mu^{\vee}$ interlaces $\la$ and each $j \in \cml$ the following diagram commutes:
		\begin{center}
			\begin{tikzcd}
				
				\cD_\Omega^1 \arrow{d}{sp_{\ml}} \arrow{r}{\widetilde{\kappa_\Omega}}
				& \cD(\Gac, \cO_\Omega) \arrow{d}{sp_{\ml}} \\
				\cD_{\ml}^1(L) \arrow{r}{\widetilde{\kappa_{\ml}}}	& \cD(\Gac,L) .
			\end{tikzcd}
			
		\end{center}
	\end{corollary}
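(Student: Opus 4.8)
The plan is to deduce the corollary directly from Proposition \ref{blprop} by transporting everything along the isomorphism $\Zp^{\times} \cong \Gac$ and its induced isomorphisms on the function and distribution spaces. First I would make precise the chain of identifications: the topological group isomorphism $\Zp^{\times} \cong \Gac$ induces a (topological) isomorphism $\cA(\Zp^{\times}, R) \cong \cA(\Gac, R)$ for $R \in \{L, \cO_\Omega\}$, and hence by continuous dualisation an isomorphism $\cD(\Zp^{\times}, R) \cong \cD(\Gac, R)$ sending $\kappa_{\ml}(\xi)$ to $\widetilde{\kappa_{\ml}(\xi)}$ and $\kappa_\Omega(\xi)$ to $\widetilde{\kappa_\Omega(\xi)}$, as recorded in the discussion preceding the corollary. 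Under these identifications the maps $\widetilde{\kappa_\Omega}$ and $\widetilde{\kappa_{\ml}}$ are \emph{by definition} the composites of $\kappa_\Omega$, $\kappa_{\ml}$ with the relevant isomorphisms, so the square in the corollary is obtained from the top square of Proposition \ref{blprop} simply by post-composing the right-hand column with the isomorphism $\cD(\Zp^{\times}, -) \cong \cD(\Gac, -)$.

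Next I would check that the specialisation maps are compatible with this transport, i.e. that $sp_{\ml} : \cD(\Zp^{\times}, \cO_\Omega) \to \cD(\Zp^{\times}, L)$ corresponds to $sp_{\ml} : \cD(\Gac, \cO_\Omega) \to \cD(\Gac, L)$ under the two vertical isomorphisms. This is immediate because $sp_{\ml}$ acts only on the coefficient ring $\cO_\Omega \twoheadrightarrow L$ and commutes with any base-change-independent isomorphism of the underlying function spaces; more concretely, for $f \in \cA(\Gac, \cO_\Omega)$ corresponding to $f' \in \cA(\Zp^{\times}, \cO_\Omega)$ one has $sp_{\ml}(f) \leftrightarrow sp_{\ml}(f')$, and dualising preserves this. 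Granting this, the commuting top square of Proposition \ref{blprop},
\[
sp_{\ml} \circ \kappa_\Omega = \kappa_{\ml} \circ sp_{\ml},
\]
becomes, after applying the coefficient-independent isomorphism to the target of both sides,
\[
sp_{\ml} \circ \widetilde{\kappa_\Omega} = \widetilde{\kappa_{\ml}} \circ sp_{\ml},
\]
which is exactly the assertion of Corollary \ref{blcor}.

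The only genuine point requiring care — and the one I would spell out — is that the isomorphism $\cA(\Zp^{\times}, R) \cong \cA(\Gac, R)$ is an isomorphism of \emph{topological} $R$-modules (so that it dualises cleanly to the distribution spaces with their weak-$*$ topologies) and that it is functorial in $R$ with respect to the quotient $\cO_\Omega \to L$ used to define $sp_{\ml}$. Both are standard once one fixes a topological generator of $\Zp^{\times}$ (or, away from $p=2$, the decomposition $\Zp^{\times} \cong \mu_{p-1} \times (1+p\Zp)$) and notes that $\Gac$ is, via the chosen isomorphism, likewise a rank-one $\Zp$-module times a finite group; there is no analytic subtlety because we are not changing the radius of analyticity, merely relabelling the group. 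I do not expect any real obstacle here — the corollary is essentially a bookkeeping consequence of Proposition \ref{blprop} — so the ``hard part'' is purely the careful statement of the topological-isomorphism compatibilities, after which the diagram chase is one line.
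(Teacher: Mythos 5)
Your proof is correct and follows precisely the route the paper intends: the paper itself simply records the identification $\Zp^{\times}\cong\Gac$, the resulting correspondences $\kappa_{\ml}(\xi)\leftrightarrow\widetilde{\kappa_{\ml}(\xi)}$ and $\kappa_{\Omega}(\xi)\leftrightarrow\widetilde{\kappa_{\Omega}(\xi)}$, and then states that the corollary "now follows" from Proposition~\ref{blprop}. Your write-up spells out the compatibility of $sp_{\ml}$ with the transport along this isomorphism, which is exactly the implicit bookkeeping the paper leaves to the reader.
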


	\section{Coleman Families}\label{fam}
	
	Since for unitary groups every arithmetic subgroup is finite, we can define spaces of Gross' style algebraic modular forms for $G=\Uno\times\Un$ following \cite{gross}.

	\begin{definition}
		The following $\mathbb{Q}$-vector space is called the space of algebraic modular forms over $G$ with coefficients in an irreducible $\Uno \times \Un$-representation $V:$ 
		\begin{multline*}
		    M(G, V)=\{f: G(\A) \to V: f \text{ is locally constant on } G(\A) \text{ and }  f(\gamma g)=\gamma f(g) \text{ for all } \gamma \in G(\Q)\}.
		\end{multline*}
	
	\end{definition}

	The reason we would like to work with algebraic modular forms is because we can decompose $M(G, V)$ as the direct limit of the subspaces 
	$$
	M(G, V, K)=\{f: G(\mathbb{A})/(G(\mathbb{R})_{+}\times K) \to V: f(\gamma g)=\gamma f(g) \text{ for all } \gamma \in G(\mathbb{Q})\}
	$$
	for $K$ compact open in $G(\hat{\mathbb{Q}})$. That is because $f \in M(G,V)$ is locally constant and it is therefore constant on cosets of open subgroups of the form $G(\mathbb{R})_{+} \times K$. Moreover, $f \in  M(G, V,K)$ is determined by its values on the cosets of the double quotient $G(\mathbb{Q}) \backslash {G(\mathbb{A})}/(G(\mathbb{R})_{+}\times K).$
	
	In order to $p$-adically interpolate the automorphic periods, we would like to consider a $p$-adic version of the algebraic modular forms. To do that, we will use the following lemma:
	
	\begin{lemma}
		Assume that $K=K_p \times K'$, with $K'$ open and compact in $G(\hat{\mathbb{Q}}) \backslash G(\mathbb{Q}_p)$. Then there is an isomorphism 
		$$M(G, V,K)\otimes\mathbb{Q}_p \cong \{F: G(\mathbb{Q})\backslash G(\mathbb{A})/(G(\mathbb{R}_{+}) \times K') \to V\otimes\mathbb{Q}_p :F(gk_{p})=k_{p}^{-1}F(g) \text{ for all } k_{p} \text{ in } K_p\}.$$
	\end{lemma}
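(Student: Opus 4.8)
The plan is to construct the isomorphism explicitly by ``untwisting'' the $G(\mathbb{Q})$-action at the prime $p$. Write $\rho\colon G\to\mathrm{GL}(V)$ for the algebraic representation; since $V$ is defined over $\mathbb{Q}$, base change along $\mathbb{Q}\hookrightarrow\mathbb{Q}_p$ endows $V\otimes\mathbb{Q}_p$ with an action of $G(\mathbb{Q}_p)$, and under the diagonal embedding $G(\mathbb{Q})\hookrightarrow G(\mathbb{Q}_p)$ the action of $\gamma\in G(\mathbb{Q})$ on $V\otimes\mathbb{Q}_p$ coming from the rational structure agrees with the action of its image $\gamma_p\in G(\mathbb{Q}_p)$. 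For $g\in G(\mathbb{A})$ let $g_p\in G(\mathbb{Q}_p)$ denote its $p$-component. I would define a map $\Psi$ from $M(G,V,K)\otimes\mathbb{Q}_p$ to the right-hand space by $\Psi(f)(g)=g_p^{-1}\cdot f(g)$, and a map $\Phi$ in the other direction by $\Phi(F)(g)=g_p\cdot F(g)$; both are manifestly $\mathbb{Q}_p$-linear.

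First I would check that $\Psi$ is well defined. For $\gamma\in G(\mathbb{Q})$ one has $(\gamma g)_p=\gamma_p g_p$, so $\Psi(f)(\gamma g)=g_p^{-1}\gamma_p^{-1}\cdot(\gamma\cdot f(g))=g_p^{-1}\cdot f(g)=\Psi(f)(g)$ by the compatibility above; thus $\Psi(f)$ is left $G(\mathbb{Q})$-invariant. Multiplying $g$ on the right by an element of $G(\mathbb{R})_{+}$ or of $K'$ leaves $g_p$ unchanged (both have trivial component at $p$) and leaves $f(g)$ unchanged (as $f$ is right $K$-invariant), so $\Psi(f)$ factors through $G(\mathbb{Q})\backslash G(\mathbb{A})/(G(\mathbb{R})_{+}\times K')$. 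Finally, for $k_p\in K_p$ we have $(gk_p)_p=g_pk_p$ and $f(gk_p)=f(g)$, whence $\Psi(f)(gk_p)=k_p^{-1}g_p^{-1}\cdot f(g)=k_p^{-1}\cdot\Psi(f)(g)$, which is exactly the equivariance defining the target. Continuity of $\Psi(f)$ is inherited from local constancy of $f$ together with continuity of $g\mapsto g_p$ and of the algebraic action.

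The map $\Phi$ is handled by the symmetric computation: $\Phi(F)(\gamma g)=\gamma_p g_p\cdot F(g)=\gamma\cdot\Phi(F)(g)$ using left $G(\mathbb{Q})$-invariance of $F$, and right invariance of $\Phi(F)$ under $K=K_p\times K'$ uses the relation $F(gk_p)=k_p^{-1}\cdot F(g)$ to absorb the extra $k_p$ produced by $(gk_p)_p=g_pk_p$, while the $K'$ and $G(\mathbb{R})_{+}$ directions are harmless as before. Since $g_pg_p^{-1}=1$, the maps $\Psi$ and $\Phi$ are mutually inverse, which proves the asserted isomorphism.

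The computations above are routine once the setup is in place; the only point requiring care — and the one I would state precisely before starting — is the compatibility in the first paragraph between the $\mathbb{Q}$-rational action of $G(\mathbb{Q})$ on $V$ and the $p$-adic action of $G(\mathbb{Q}_p)$ on $V\otimes\mathbb{Q}_p$ under the diagonal embedding, since it is exactly what converts the $G(\mathbb{Q})$-\emph{equivariance} of $f$ into $G(\mathbb{Q})$-\emph{invariance} of $F$. The main risk is simply a misplaced inverse in the $K_p$-equivariance, which I would guard against by verifying the formulas on a rank-one example.
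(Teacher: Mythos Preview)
Your argument is correct and is exactly the standard ``untwisting at $p$'' construction; the paper does not give its own proof but simply cites \cite[Prop.~8.6]{gross}, and what you have written is precisely the content of that reference. There is nothing to add.
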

	
	\begin{proof}
		See \cite[Prop 8.6]{gross}
	\end{proof}
	
	We denote the right hand side of this isomorphism by $M_p(G,V,K)$ and we drop $K$ from the notation when the choice is clear.

     From now on, we consider level $K=\IwG\cdot K^{(p)}$ and thus drop this from notation, writing $M_p(G,V).$
	
	Consider the locally symmetric space for $G$ of level $K$
	$$Y_K=G(\Q) \backslash G(\A) / K.$$  This space has dimension zero since it's a finite set and therefore we have that $H_c^i(Y_K,*)=0$ for all $i>0$. We thus only study $H_c^0(Y_K,*)=M_p(G,*).$

	 Denote by $\widetilde{\ps_{\alpha}}$ the $p$-refinement of $\ps$ at a non-critical $\alpha$, in the sense of Definition \ref{crit}. 
	
	\begin{definition}\label{multassumption}
		We say that $\widetilde{\ps_{\alpha}}$ has the multiplicity one property with respect to an open compact subgroup $K$ of $G(\A_f)$ if the space of the $K$-invariants of $\widetilde{(\pi_f,\sigma_f)_{\alpha}}$ is 1-dimensional.
	\end{definition}
	
	We now fix a cohomological automorphic representation $\ps$ of $G$ of level $K$ and a non-critical $p$-refinement $\widetilde{\ps_{\alpha}}$ which satisfies the multiplicity one property with respect to its level $K$.
	
	\begin{remark} 
		The assumption of the existence of an open compact $K_l \subset G(\Q_l)$ such that $(\pi_l,\sigma_l)^{K_l}$ is 1-dimensional is needed due to the lack of a theory of newforms in the case of unitary groups. This is an active area of research and we justify this assumption by noting that this is known to hold at almost all places for odd unitary groups by the work of \cite{AOY}. In the even case, \cite{Atobe23} gives a partial answer. We only need this assumption for the variation in families of Theorem \ref{mainthm}, not for Theorem \ref{thm1}.
	\end{remark}
	
	 We consider an affinoid $\Omega$ around the weight $\ml \in \cW$. We will now consider the distribution-valued algebraic modular forms $M_p(G,\cD_{\Omega})$ which come equipped with a natural specialisation map to the classical ones. A Coleman family will be an element in this space interpolating the classical forms. 

  The following proposition guarantees the existence of a Coleman family through $\ps$.

  \begin{proposition}
 The space  $M_p(G,\cD_{\Omega})_{\widetilde{\ps_{\alpha}}}$ is free of rank 1 over $\cO_\Omega.$
      
  \end{proposition}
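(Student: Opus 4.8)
The strategy is to combine the vanishing of higher cohomology for the zero-dimensional locally symmetric space $Y_K$ with a classicality/control argument in the spirit of Coleman's theory, together with the multiplicity one hypothesis on $\widetilde{\ps_\alpha}$. First I would recall that $M_p(G,\cD_\Omega) = H^0_c(Y_K,\cD_\Omega)$ and that, by the construction of $U_p^\circ$ and the non-critical slope condition (Definition \ref{crit}), the $U_p^\circ$-operator acts compactly on $M_p(G,\cD_\Omega)$, so that one has a slope-$\leq h$ decomposition $M_p(G,\cD_\Omega) = M_p(G,\cD_\Omega)^{\leq h} \oplus M_p(G,\cD_\Omega)^{> h}$ with the slope-$\leq h$ part finitely generated (in fact projective, hence free after possibly shrinking $\Omega$) over $\cO_\Omega$. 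This is exactly the setup of \cite{BW21,BDJ} adapted to $G = \Uno\times\Un$; since $Y_K$ is finite, the usual overconvergent-to-classical spectral sequence degenerates and there is nothing to check in positive degree.

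**Specialisation and control.** Next I would invoke the control theorem: the short exact sequence $0 \to \cD_\Omega \to \cD_\Omega \to \cD_{\ml}^1(L) \to 0$ obtained from specialisation $sp_{\ml}$, together with the non-criticality of $\alpha$, shows that $sp_{\ml}$ induces an isomorphism on the slope-$\leq v_p(\alpha_p)$ parts after localising at $\widetilde{\ps_\alpha}$, and similarly that the slope-$\leq h$ part of $M_p(G,\cD_\Omega)$ interpolates the classical spaces $M(G,\mathcal{V}_{\ml}^\vee)$ at all classical points of $\Omega$ whose $p$-refinement is non-critical. Localising everything at the maximal ideal $\fm_\pi$ away from $p$ and at the $U_p^\circ$-eigenvalue $\alpha_p^\circ$, one gets a finitely generated $\cO_\Omega$-module $M_p(G,\cD_\Omega)_{\widetilde{\ps_\alpha}}$ whose fibre at $\ml$ is $M(G,\mathcal{V}_{\ml}^\vee)_{\widetilde{\ps_\alpha}}\otimes L$. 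By the multiplicity one assumption (Definition \ref{multassumption}), this fibre is one-dimensional over $L$.

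**Freeness.** Finally, the module is torsion-free over $\cO_\Omega$ because it sits inside the torsion-free module of $\cO_\Omega$-valued distribution forms (or: because $M_p(G,\cD_\Omega)$ injects into a product of its classical specialisations over a Zariski-dense set of weights, all of which are $L$-vector spaces). A finitely generated torsion-free module over the (one-dimensional, regular) affinoid algebra $\cO_\Omega$ whose fibre at one point has dimension $1$ is free of rank $1$ after shrinking $\Omega$ to a sufficiently small affinoid around $\ml$; equivalently, by Nakayama the module is generated by one element, and torsion-freeness forces that generator to be a non-zero-divisor, so the module is $\cong \cO_\Omega$. This last step uses that $\cO_\Omega$ is a PID (or at least that we may pass to a small enough affinoid where the coherent sheaf $M_p(G,\cD_\Omega)_{\widetilde{\ps_\alpha}}$ is free), which is the standard way such statements are packaged in \cite{BW21}.

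**Main obstacle.** The delicate point is the control/classicality step: verifying that, for $\alpha$ of non-critical slope in the sense of Definition \ref{crit}, specialisation $sp_{\ml}$ is an isomorphism on the relevant finite-slope eigenspace — i.e. that small slope forces classicality in the distribution-valued setting for $G=\Uno\times\Un$. One must check that the failure of surjectivity of $sp_{\ml}$ on $\cD_\Omega$ contributes only to $U_p^\circ$-eigenvalues of slope exceeding $\min_i(\mu_i-\mu_{i+1}+1)$ and $\min_j(\la_j-\la_{j+1}+1)$, which is where the precise normalisations $\beta_p^\circ,\gamma_p^\circ$ and the product structure of $U_p^\circ$ on the two unitary factors enter; this is essentially \cite[\S 4.1]{BW21} transported to our group, and the compactness of $G(\R)$ (so that $Y_K$ is finite and no boundary or Eisenstein contributions arise) makes it go through cleanly.
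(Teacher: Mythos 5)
Your proof takes essentially the same route as the paper's: slope decomposition (exactness giving the injection $M_p(G,\cD_\Omega)^{\leq h}\otimes\cO_\Omega/\fm_{\ml}\hookrightarrow M_p(G,\cD_{\ml})^{\leq h}$), the classicality theorem of \cite{BW21} at a non-critical $p$-refinement, multiplicity one to make the classical fibre one-dimensional, and Nakayama to deduce freeness. The only substantive deviation is that you spell out the final commutative-algebra step (torsion-freeness plus a one-generator Nakayama argument) whereas the paper delegates it to \cite[Lemma 2.9]{BDJ}.

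One small inaccuracy: you describe $\cO_\Omega$ as a one-dimensional regular ring (``$\cO_\Omega$ is a PID''). The affinoid $\Omega$ sits inside the $(2n+1)$-dimensional weight space of $G$, so $\cO_\Omega$ is in general a higher-dimensional affinoid algebra, and finitely generated torsion-free modules over such rings need not be free. Fortunately your alternative phrasing rescues the argument without this hypothesis: Nakayama (using that $M/\fm_{\ml}M$ injects into a one-dimensional $L$-vector space) shows the localised module is cyclic, hence $\cong\cO_\Omega/I$, and torsion-freeness over the domain $\cO_\Omega$ forces $I=0$, provided the module is non-zero. That non-vanishing (equivalently, surjectivity onto the classical fibre, not just injectivity) is what the reference to \cite[Lemma 2.9]{BDJ} actually secures in the paper, and it would be worth flagging explicitly rather than treating it as automatic.
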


\begin{proof}

The point $\ml$ corresponds to a maximal ideal $\fm_{\ml}=\{f \mid f(\ml)=0\}\subset \cO_\Omega$ and we can consider:
	
	$$ M_p(G,\cD_{\Omega}) \otimes\cO_\Omega/\fm_{\ml} \hookrightarrow M_p(G,\cD_{\Omega}\otimes\cO_\Omega/\fm_{\ml}) \cong  M_p(G, \cD_{\ml}).$$
	
	The Hecke algebra $\He$ acts on this space and thus gives slope decompositions. Because slope decompositions are exact, we get
	
	\begin{equation}
		M_p(G, \cD_\Omega)^{\leq h} \otimes \cO_\Omega/\fm_{\ml}  \hookrightarrow M_p(G, \cD_{\ml})^{\leq h}. 		\label{eq1}
	\end{equation}
	
	By \cite[Thm 4.4]{BW21}, for non-critical slope $\alpha$, we have that distribution-valued forms of slope $\alpha$ are classical. This means that by localising at a non-critical slope refinement $\widetilde{\ps}_{\alpha}$, we have a Hecke-equivariant isomorphism 
	
	\begin{equation}
		M_p(G, \cD_{\ml})_{\widetilde{\ps_{\alpha}}} \tilde{\rightarrow} M_p(G, V^{\vee}_{\ml})_{\widetilde{\ps_{\alpha}}}. 		\label{eq2}
	\end{equation}

	By \cite[Lemma 2.9]{BDJ}, we can apply Nakayama's lemma on (\ref{eq1}) and since the right-hand side in (\ref{eq2}) is 1-dimensional over $\cO_\Omega$ by the multiplicity one property, we get that $M_p(G,\cD_{\Omega})_{\widetilde{\ps_{\alpha}}}$ is free of rank 1 over $\cO_\Omega$.
    
\end{proof} 

    We therefore fix a generator of $M_p(G, \cD_{\ml})_{\widetilde{\ps_{\alpha}}}$ which lifts to a generator of $M_p(G,\cD_{\Omega})_{\widetilde{\ps_{\alpha}}}$, which in turn by \cite[Lemma 2.10]{BDJ} can be lifted to an eigenclass $\Phi_{\Omega} \in M_p(G,\cD_{\Omega})$, up to shrinking $\Omega.$
      
      \begin{definition}
          An eigenclass $\Phi_{\Omega} \in M_p(G,\cD_{\Omega})$ arising from the above procedure is called a Coleman family through ${\widetilde{\ps_{\alpha}}}$.
      \end{definition}

	\section{The Gan--Gross--Prasad conjecture}\label{ggp}

The aim of this section is to exhibit the link between automorphic period integrals twisted by anticyclotomic characters and central $L$-values. We use results from \cite[Section 2]{Liu}.
	
	For an accessible character $\chi: \Gac \to \bar{L}^{\times}$ of infinity type $(-j,j)$, for some $j \in \cml$ and conductor $p^{\beta}$ with $\beta>0$ and $\phi \in\ps$, we define the automorphic $(H, \chi)$-period integral 
	
	$$
	\mathscr{P}_{H, \chi}(\phi) \coloneqq \int_{[H]} \phi(h) \chi\left(\operatorname{det}(h)\right) d h. 
	$$ 
	where we choose the Haar measure on $GL_n(\Qp)$ that gives a hyperspecial maximal subgroup volume 1. We also fix a decomposition $dh = dh^{\infty}\cdot dh^p \cdot dh^{\infty,p}$ of the Tamagawa measure on $H(\A)$ such that the volume of $H(\R)$ under $dh^{\infty}$ is 1.
	
	Consider the following integral operator over matrix coefficients 
	
	$$\alpha^{\chi}_{\nu} \in \operatorname{Hom}_{H(\mathbb{Q}_{\nu})\times H(\mathbb{Q}_{\nu})}\left(\ps_{\nu,\chi}\times\ps^{\vee}_{\nu,\chi}, \bar{L} \right)$$
	
	defined by

	$$\alpha^{\chi}_{\nu} \left(\phi_{\nu},\phi^{\vee}_{\nu}\right)\coloneqq\int_{H(\mathbb{Q}_{\nu})}\langle \pi_{\nu}(h_{\nu})\phi_{\nu}, \phi^{\vee}_{\nu}\rangle_{\nu}\chi(\operatorname{det}(h_{\nu})) dh_{\nu}.$$
	
	As explained in \cite[Lemma 2.6]{Liu}, we have that for an embedding $\iota: \bar{L} \to \C$,
	
	$$\iota\left(\int_{H(\mathbb{Q}_{\nu})}\langle \pi_{\nu}(h_{\nu})\phi_{\nu}, \phi^{\vee}_{\nu}\rangle_{\nu}\chi(\operatorname{det}(h_{\nu})) dh_{\nu}\right)=\int_{H(\mathbb{Q}_{\nu})}\iota\left(\langle \pi_{\nu}(h_{\nu})\phi_{\nu}, \phi^{\vee}_{\nu}\rangle_{\nu}\chi(\operatorname{det}(h_{\nu})) \right) dh_{\nu}.$$
	
	We will therefore drop $\iota$ from the notation and consider the $\alpha^{\chi}_{
		\nu}$'s as taking values in $\C.$
	
	Similarly, by  \cite[Lemma 2.4]{Liu}, we have that the local $L$-factor at $p$ behaves nicely under $\iota$, that is
	
	$$\iota\left(L_{\nu}(1/2, \pi\times\left(\sigma\otimes\chi\right))\right)=L_{\nu}(1/2, \iota\left(\pi\right)\times\iota\left(\sigma\otimes\chi\right)).$$
	
	To link the automorphic period integral with $L$-values, we will use the unitary Gan--Gross--Prasad conjecture, which is now a theorem. For the interested reader we recommend \cite{ggp} for the initial statement of the conjecture and we note that the following theorem is stated and proved in its full generality in  \cite[Theorem 1.1.6.1]{GGPfinal}, see also \cite{BPLZZ}.
	
	\begin{theorem}[The Gan--Gross--Prasad Conjecture]\label{ggpthm}
		With notation as above, we have that
		$$\frac{\mid\iota\left(\cP_{H, \chi}(\phi)\right)\mid^2}{\langle \phi, \phi^{\vee} \rangle}=\frac{\Delta}{S_{\pi\times\sigma}}\frac{L^S(1/2, \pi\times\left(\sigma\otimes\chi\right))}{L^S(1,\pi,\mathrm{Ad})L^S(1,\sigma,\mathrm{Ad})}\prod_{\nu \in S}\alpha^{\chi}_{\nu}(\phi_{\nu},\phi^{\vee}_{\nu}),$$
		
		where $\Delta_{n+1}=\prod_{i=1}^{n+1} L\left(i, \eta^i\right)$ and $S_{\pi\times\sigma}$ is the size of the Arthur parameter of $\pi\times\sigma$ and $S$ is a large enough set of primes containing $p$.
		
	\end{theorem}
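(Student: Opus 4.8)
This statement is a restatement of the Ichino--Ikeda-type refinement of the global Gan--Gross--Prasad conjecture for the Bessel period attached to $\Un \subset \Uno \times \Un$, applied to the cuspidal representation $\pi \boxtimes (\sigma \otimes \chi)$ of $G(\A)$. The plan is to reduce the identity to the now-established form of that conjecture and to keep careful track of the bookkeeping introduced by the twist by $\chi$, the definiteness of $G$ at $\infty$, and the passage from $\bar L$ to $\C$; nothing here reproves GGP, so the real content is the verification that the hypotheses of the global theorem are met.

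First I would observe that, $\chi$ being an anticyclotomic Hecke character, $\sigma \otimes \chi$ is again a cuspidal automorphic representation of $\Un(\A)$ with tempered Arthur parameter, and that $\cP_{H,\chi}(\phi)$ is simply the usual GGP period $\cP_H$ for the pair $(\pi, \sigma \otimes \chi)$. Before applying the global formula one must check that every local Hom space $\operatorname{Hom}_{H(\Q_v)}(\pi_v \times (\sigma_v \otimes \chi_v), \C)$ is one dimensional and nonzero for this twisted pair. At $p$, where $p$ splits and $\Un(\Qp) = \GL_n(\Qp)$, this is the local Rankin--Selberg result of Jacquet--Piatetski-Shapiro--Shalika and holds unconditionally; at a finite place $v \neq p$ the data and $\chi_v$ are unramified, so the spherical functional survives; and at $\infty$ the Hom space is $\operatorname{Hom}_{\Un(\R)}\big(\mathcal{V}_{\check\mu} \otimes (\mathcal{V}_\la \otimes \chi_\infty), \C\big)$, which by Theorem \ref{brl} is one dimensional exactly when $j \in \cml$ --- precisely the accessibility hypothesis on $\chi$. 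Together with ($\bigstar$) at the remaining places this shows all local functionals are nonzero, so the chosen inner form $G$ is the one on which the global period lives and the factorization of $\cP_{H,\chi}$ is meaningful.

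Next I would invoke the global refined conjecture itself. For unitary groups in the codimension-one (Bessel) case this is due to W.~Zhang under local hypotheses, and in full generality --- after the isolation of the cuspidal spectrum --- to Beuzart-Plessis--Liu--Zhang--Zhang and the account in \cite{GGPfinal} (see also \cite{BPLZZ}, and \cite{ggp} for the original formulation). In raw form it reads $$\frac{|\cP_H(\phi)|^2}{\langle \phi, \phi^\vee \rangle} = \frac{1}{S_{\pi\times\sigma}} \prod_v \alpha^{\chi,\natural}_v(\phi_v, \phi^\vee_v),$$ where $\alpha^{\chi,\natural}_v$ is the matrix-coefficient integral $\alpha^\chi_v$ renormalized by $\Delta_{n+1,v} \cdot L_v(1,\pi_v,\mathrm{Ad}) L_v(1,\sigma_v,\mathrm{Ad}) / L_v(1/2, \pi_v \times (\sigma_v \otimes \chi_v))$, and $S_{\pi\times\sigma}$ is the integer attached to the component group of the Arthur parameter, read off from the endoscopic classification of the discrete spectrum of unitary groups. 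For every $v \notin S$ (where $S$ contains $p$, all ramified places and $\infty$) the standard unramified computation gives $\alpha^{\chi,\natural}_v = 1$ on spherical vectors; regrouping the local $L$-factors over $v \notin S$ converts the Euler product into $\Delta$ (with $\Delta = \Delta_{n+1} = \prod_{i=1}^{n+1} L(i, \eta^i)$ the product of abelian $L$-values coming from the Tamagawa volume of $\Uno$) times the ratio of partial $L$-functions $L^S(1/2, \pi \times (\sigma \otimes \chi)) / \big(L^S(1, \pi, \mathrm{Ad}) L^S(1, \sigma, \mathrm{Ad})\big)$, leaving the finite product $\prod_{\nu \in S} \alpha^\chi_\nu(\phi_\nu, \phi^\vee_\nu)$. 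This is the asserted identity.

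Two normalization points finish the argument. Because $G(\R)$ is compact, with the Tamagawa measure normalized so that $H(\R)$ has volume $1$ the factor $\alpha^\chi_\infty$ is an explicit nonzero constant determined by the essentially canonical archimedean branching vector, and it --- like the volume-$1$ hyperspecial choice at $p$ --- is exactly what gets absorbed into the constant $\alpha^\star$ of Theorems \ref{thm1} and \ref{mainthm}. Finally, moving between $\bar L$-valued and $\C$-valued quantities under a fixed $\iota : \bar L \hookrightarrow \C$ is legitimate because both the matrix-coefficient integrals $\alpha^\chi_\nu$ and the local $L$-factors are algebraic and $\iota$-equivariant, by \cite[Lemmas 2.4, 2.6]{Liu}, so $\iota$ may be suppressed throughout. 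The only genuine obstacle is external: the heavy lifting is the global GGP theorem of \cite{GGPfinal}, and the substantive work here is the check, carried out above, that its hypotheses --- cuspidality, tempered $A$-parameter, and nonvanishing of all local Hom spaces --- are satisfied by $\ps$ satisfying ($\bigstar$) together with its twist by an accessible character $\chi$.
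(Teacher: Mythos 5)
The paper does not actually give a proof of this theorem: it is stated and then attributed outright to \cite{ggp}, \cite{BPLZZ} and \cite[Theorem 1.1.6.1]{GGPfinal}, with no \texttt{proof} environment, and the passage from local normalizations to the partial-$L$-function formulation is inherited from the cited works. Your proposal ultimately does the same thing — the real content is delegated to \cite{GGPfinal} — so in spirit you agree with the paper; what you add is a (useful) verification that the hypotheses of the global theorem are met by the twisted pair $(\pi, \sigma\otimes\chi)$.

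One imprecision worth flagging in that added layer: you assert that at a finite place $v\neq p$ ``the data and $\chi_v$ are unramified, so the spherical functional survives.'' That is not an assumption the paper makes. The character $\chi$ is indeed unramified away from $p$ (it factors through $\Gamma^{\mathrm{ac}}$), but $\pi_v$ and $\sigma_v$ may well be ramified at $v\neq p$; the paper sidesteps this entirely by taking as a \emph{hypothesis} in ($\bigstar$) that $\operatorname{Hom}_{H_v}(\pi_v\times\sigma_v,\C)\neq 0$ for all $v$, and by working on the definite inner form so that this local nonvanishing is automatic in the GGP dichotomy. So your verification at $v\neq p,\infty$ should be replaced by the observation that nonvanishing of the local Hom spaces is part of the running hypothesis and is stable under the unramified twist $\chi_v$, rather than deduced from a nonexistent unramifiedness assumption. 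The rest of your bookkeeping — the archimedean branching check via Theorem \ref{brl} for accessible $\chi$, the unramified computation outside $S$ producing $\Delta$ and the partial $L$-factors, and the $\iota$-equivariance of the local terms from \cite[Lemmas 2.4, 2.6]{Liu} — matches the intent of the paper.
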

	
	We would actually like to apply the above formula in a twisted setting where we evaluate the automorphic period integral at $$\tilde{\phi_{\beta}}=\otimes_{\nu \neq p}\phi_{\nu}\otimes\pi\left(\left( \xi, 1_n\right) \cdot t_p^{\beta}\right) \phi_{p}$$ and similarly for $\tilde{\phi}_{\beta}^\vee$, where 
	
	$\xi=\begin{pmatrix}
		& w_{n} &	&  &\begin{matrix} 1 \\ 1 \\ \vdots \end{matrix} \\ 
		0   & \cdots && 0 &1 \\
	\end{pmatrix}.$ 
	
	\begin{proposition}\label{testvectors}
		There exist $\phi$, $\phi^{\vee}$ such that for each accessible character $\chi: \Gamma \to \bar{L}^{\times}$ of conductor $p^{\beta}$ with $\beta>0$:
		
		\begin{align}\frac{\mid\cP_{H, \chi}(\tilde{\phi}_{\beta})\mid^2}{\langle \tilde{\phi}_{\beta}, \tilde{\phi}_{\beta}^\vee\rangle} =&\frac{\Delta}{S_{\pi\times\sigma}} \frac{L(1/2, \pi\times\left(\sigma\otimes\chi\right))}{L(1,\pi,\mathrm{Ad})L(1,\sigma,\mathrm{Ad})} \nonumber \\
			& c\prod_{i=1}^n\left(1-p^{-i}\right)^{-2} \cdot\left(p^{-\frac{n(n+1)(2n+1)}{6}}\right)^{\beta} \left(i_1 \phi\right)(1) \left(i_2 \phi^{\vee}\right)(1). \nonumber 	
		\end{align}
		
	\end{proposition}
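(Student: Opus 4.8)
The plan is to deduce Proposition \ref{testvectors} from the Gan--Gross--Prasad formula of Theorem \ref{ggpthm} by choosing a factorizable test vector $\phi = \otimes_\nu \phi_\nu$ (and similarly $\phi^\vee$) whose components away from $p$ are arbitrary but fixed, and whose component at $p$ is a suitable Iwahori-fixed vector, and then computing the local factor $\alpha^\chi_p$ on the translated vector $\pi((\xi,1_n)\cdot t_p^\beta)\phi_p$ explicitly. First I would take $S$ to be a large enough finite set of primes containing $p$ and all ramified places, so that at every $\nu \notin S$ the data are unramified and the corresponding local integral $\alpha^\chi_\nu$ equals the ratio of local $L$-factors $\frac{L_\nu(1/2,\pi\times(\sigma\otimes\chi))}{L_\nu(1,\pi,\mathrm{Ad})L_\nu(1,\sigma,\mathrm{Ad})}$ by the unramified computation (Ichino--Ikeda); this turns the partial $L$-functions $L^S$ into completed ones up to the factors $\alpha^\chi_\nu$ for $\nu \in S$. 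For $\nu \in S$, $\nu \neq p$, since $\chi$ is unramified outside $p$ and the local representations and test vectors are held fixed, $\alpha^\chi_\nu(\phi_\nu,\phi^\vee_\nu)$ is simply a nonzero constant independent of $\beta$ and $\chi$, which I absorb into the overall constant $c$ together with $\langle\phi,\phi^\vee\rangle$-type normalizations away from $p$.

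The heart of the argument is the computation at $p$. Here $G(\Q_p) = \GL_{n+1}(\Q_p)\times\GL_n(\Q_p)$, $H(\Q_p) = \GL_n(\Q_p)$ embedded diagonally, $\psp$ is Iwahori-spherical, and I need to evaluate
$$\alpha^\chi_p\bigl(\pi((\xi,1_n)t_p^\beta)\phi_p,\, \pi^\vee((\xi,1_n)t_p^\beta)\phi^\vee_p\bigr) = \int_{\GL_n(\Q_p)} \langle \pi_p((\xi,1_n)t_p^\beta h) \phi_p,\ \pi^\vee_p((\xi,1_n)t_p^\beta)\phi^\vee_p\rangle\, \chi(\det h)\, dh,$$
where $\chi$ has conductor $p^\beta$. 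The strategy is: change variables $h \mapsto t_p^{-\beta} h t_p^\beta$ (tracking the modulus character, which contributes the factor $\bigl(p^{-n(n+1)(2n+1)/6}\bigr)^\beta$ — note $\sum_{i=1}^n i^2 = n(n+1)(2n+1)/6$ is exactly the sum of exponents in the relevant part of $t_p$), conjugate $\xi$ through the matrix coefficient, and observe that after these manipulations the integrand is supported on (a translate of) the pro-$p$ Iwahori where $\chi$ restricted to $\det$ is controlled. Since $\chi$ has conductor $p^\beta$, the orthogonality of characters collapses the $h$-integral to the value of the matrix coefficient at a single point together with the volume of the relevant congruence subgroup, which produces the Euler-type factor $\prod_{i=1}^n(1-p^{-i})^{-2}$ from normalizing local Haar measure against the hyperspecial volume-$1$ normalization, and the evaluation $(i_1\phi)(1)(i_2\phi^\vee)(1)$, where $i_1,i_2$ are the maps realizing the local branching pairing on the Iwahori-fixed line. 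This is essentially a twisted analogue of the local zeta integral computations in \cite[\S2]{Liu}, and I would follow that reference closely, the novelty being the presence of the intertwining element $\xi$ and the $\beta$-th power of $t_p$.

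Finally I would assemble the pieces: plug the unramified identities, the constant from $\nu \in S \setminus\{p\}$, and the explicit value of $\alpha^\chi_p$ into Theorem \ref{ggpthm}, collect all $\beta$- and $\chi$-independent scalars into $c$, and read off the stated formula. The main obstacle I anticipate is the local computation at $p$: verifying that the translate $\pi((\xi,1_n)t_p^\beta)\phi_p$ is the correct test vector making the local integral non-vanishing and computable (this is where the choice of $\xi$ as the specific anti-diagonal-plus-column matrix and the Iwahori structure are essential), and correctly bookkeeping the modulus and measure-normalization factors so that the powers of $p$ come out exactly as $\bigl(p^{-n(n+1)(2n+1)/6}\bigr)^\beta$ and $\prod_{i=1}^n(1-p^{-i})^{-2}$. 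A secondary technical point is ensuring that $\phi$ and $\phi^\vee$ can be chosen globally factorizable with the prescribed $p$-component while keeping $\langle\phi,\phi^\vee\rangle \neq 0$ and all the away-from-$p$ local integrals nonzero, so that the displayed identity is not vacuous.
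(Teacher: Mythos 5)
Your overall plan is the same as the paper's: apply the Gan--Gross--Prasad formula (Theorem~\ref{ggpthm}), fix factorizable test vectors so that the away-from-$p$ local factors collapse into a constant, and carry out an explicit evaluation of $\alpha^\chi_p$ on the translate $\pi\bigl((\xi,1_n)t_p^\beta\bigr)\phi_p$. Where you diverge is precisely in the one step that carries all the content, namely the local computation at $p$, and there your sketch does not actually establish the constants in the statement.

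The paper does not attempt a direct change-of-variables/orthogonality evaluation of the matrix-coefficient integral. It first invokes \cite[Prop.~4.10]{Zha14} to replace $\alpha^\chi_p(\phi_p,\phi_p^\vee)$ by a product of two local Rankin--Selberg zeta integrals over $(U\cap H)(\Q_p)\backslash H(\Q_p)$ of the Whittaker functions $i_1\phi$ and $i_2\phi^\vee$, and then applies the Local Birch Lemma \cite[Cor.~2.8]{Jan11} to those zeta integrals. That lemma produces the factor $c\prod_{i=1}^n(1-p^{-i})^{-2}$, the power $\bigl(p^{-n(n+1)(n+2)/3}\bigr)^\beta$, and Gauss sums $G_{\psi_F}(\chi)^{n(n+1)/2}G_{\psi_F^{-1}}(\chi^{-1})^{n(n+1)/2}$; a final computation $G_{\psi_F}(\chi)G_{\psi_F^{-1}}(\chi^{-1})=p^\beta$ converts the exponent $-n(n+1)(n+2)/3$ into $-n(n+1)(2n+1)/6$. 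Your heuristic that $\bigl(p^{-n(n+1)(2n+1)/6}\bigr)^\beta$ falls directly out of the modulus character via $\sum_{i=1}^n i^2$ therefore does not describe the mechanism the paper uses: the stated power is the residue of a Gauss-sum cancellation, not a single modulus-character contribution. Likewise, $\prod_{i=1}^n(1-p^{-i})^{-2}$ is an output of the Local Birch Lemma, not a hyperspecial-volume normalization. Until you actually run the Zhang reduction to Whittaker functionals and then invoke (or reprove) the Local Birch Lemma with the $\xi$-twist and the conductor-$p^\beta$ character, the assertion that ``orthogonality collapses the integral to a single point and the stated constants appear'' is a gap: it is not clear your direct argument produces these specific factors, and you have flagged as much yourself. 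In short, the skeleton matches the paper, but the load-bearing step is unproved and the explanation offered for the exponent is wrong.
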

	
	\begin{proof}
		The proof proceeds as in \cite{Liu}, we give a sketch for the convenience of the reader.
		
		The choice of $\phi$, $\phi^{\vee}$ is as follows:
		
		\begin{itemize}
			\item At $p$, we choose an Iwahori-fixed vector, which exists by the assumption that $\ps$ is Iwahori spherical.
			\item Away from $p$, we choose vectors that satisfy the conclusion of \cite[Prop. 2.10]{Liu}. In particular, this guarantees the appropriate cancellations in the Gan--Gross--Prasad formula.
		\end{itemize}
		We now perform some local calculations at $p$, so we supress $p$ from the notation:
		
		One can view $\alpha^{\chi}$ as a function on the Whittaker functionals by choosing $G(\Qp)$-equivariant isomorphisms
		$i_1: \pi \otimes_{\mathbb{L}, \iota} \mathbb{C} \stackrel{\sim}{\rightarrow} \mathcal{W}(\iota \pi)_\psi$ and $i_2 \in \pi^{\vee} \otimes_{\mathbb{L}, \iota} \mathbb{C} \stackrel{\sim}{\rightarrow} \mathcal{W}\left(\iota \pi^{\vee}\right)_{\psi^{-1}},$ where $\psi$ is the usual additive character.
		
		By \cite[Prop 4.10]{Zha14} we have that for every ramified character $\chi: E^{\times,-} \rightarrow \mathbb{C}^{\times}$ and for every $\phi \in \pi$ and $\phi^{\vee} \in \pi^{\vee}$ 
		
		\begin{align}
			\alpha^{\chi}\left(\phi, \phi^{\vee}\right)=&c\left(\int_{(U \cap H)(\Qp) \backslash H(\Qp)}\left(i_1 \phi\right)(h) \cdot \chi(\operatorname{\operatorname{det}} h)  \mathrm{d} h\right) \nonumber \\	
			&\left(\int_{(U \cap H)(\Qp) \backslash H(\Qp)}\left(i_2 \phi^{\vee}\right)(h) \cdot \chi(\operatorname{\operatorname{det}} h)^{-1}  \mathrm{~d} h\right), \nonumber
		\end{align}
		
		where $c$ is a constant depending only on the choice of measures.
		
		We can now apply the above formula to $\pi\left(\left(\xi, 1_n \right) \cdot t_p^{\beta}\right) \phi_p$ and $\pi^{\vee}\left(\left(\xi, 1_n\right) \cdot t_p^{\beta}\right) \phi_p^{\vee}$ when $\chi$ has conductor $p^{\beta}$. We have assumed that $G$ splits at $p$ and we can therefore apply the Local Birch Lemma of \cite[Cor.2.8]{Jan11}, which gives 
		$$
		\begin{aligned}
			& \alpha^{\chi}\left(\pi\left(\left(\xi, 1_n \right) \cdot t_p^{\beta}\right) \phi_p, \pi^{\vee}\left(\left(\xi, 1_n\right) t_p^{\beta}\right) \phi_p^{\vee}\right) \\
			& =c \prod_{i=1}^n\left(1-p^{-i}\right)^{-2} \cdot\left(p^{-\frac{n(n+1)(n+2)}{3}}\right)^{\beta} \cdot G_{\psi_F}(\chi)^{\frac{n(n+1)}{2}} G_{\psi_F^{-1}}\left(\chi^{-1}\right)^{\frac{n(n+1)}{2}} \cdot\left(i_1 \phi\right)(1) \cdot\left(i_2 \phi^{\vee}\right)(1).
		\end{aligned}
		$$

		A quick calculation shows that $G_{\psi_F}(\chi)G_{\psi_F^{-1}}\left(\chi^{-1}\right)=p^{\beta}.$
		
		Returning to the global setting, for an accessible character $\chi$ of $\Gamma^{\text{ac}}$ of conductor $p^{\beta}$ with $\beta>0$, we let $\tilde{\phi}_{\beta}=\otimes_{\nu \neq p}\phi_{\nu}\otimes\pi\left(\left(\xi, 1_n\right) \cdot t_p^{\beta}\right) \phi_{p}$ and similarly for $\tilde{\phi}_{\beta}^\vee,$. It then follows by Theorem \ref{ggpthm} above and our specific choice of vectors that:

		\begin{align}	
			\frac{\mid\cP_{H, \chi}(\tilde{\phi}_{\beta})\mid^2}{\langle \tilde{\phi}_{\beta}, \tilde{\phi}_{\beta}^\vee\rangle}=&\frac{\Delta}{S_{\pi\times\sigma}} \frac{L(1/2, \pi\times\left(\sigma\otimes\chi\right))}{L(1,\pi,\mathrm{Ad})L(1,\sigma,\mathrm{Ad})} \nonumber \\
			&c \prod_{i=1}^n\left(1-p^{-i}\right)^{-2} \cdot\left(p^{-\frac{n(n+1)(2n+1)}{6}}\right)^{\beta} \left(i_1 \phi\right)(1) \left(i_2 \phi^{\vee}\right)(1). \nonumber
		\end{align}
	\end{proof}
	
	\section{The \texorpdfstring{$p$}{p}-adic \texorpdfstring{$L$}{L}-function}\label{plf}
 
 We are now ready to prove our main results. From now on, we assume that $\ps$ has multiplicity one property with respect to the level $K$.
	
	To ease notation, we set $u=(\xi, 1_n)$. We abuse notation and denote by $M_p(-)$ the localised spaces $M_p(-)_{\widetilde{\ps_{\alpha}}}.$ We will be lowering the level $K$,  to a level $K_{\beta}=(u t_p^{\beta}) K (u t_p^{\beta})^{-1},$ where $\beta>0.$ It is easy to see (e.g \cite[Lemma 4.11]{Liu}) that $\left[K_{\beta} : K_{\beta+1}\right]=p^{\frac{n(n+1)(n+2)}{3}}$ and that for $\phi \in M_p(G,\cD_{\Omega})$, $\tilde{\phi_{\beta}}=\otimes_{\nu \neq p}\phi_{\nu}\otimes\pi\left(u \cdot t_p^{\beta}\right) \phi_{p} \in M_p(G,\cD^{\beta}_{\Omega}).$
	
	For ${\Phi_\Omega} \in M_p(G,\cD^{\beta}_{\Omega}),$ $\varPhi \in M_p(G,\cD^{\beta}_{\ml})$ and $\phi \in M_p(G,\mathcal{V}_{\ml}^{\vee}),$ we define respectively:
	
	$$\mathfrak{L}_{p, \beta}({\Phi_\Omega}) \coloneqq \mu(K_{\beta}) \sum_{x \in [H]/K_{\beta}}
	\frac{\widetilde{\kappa_{\Omega}}({\Phi_\Omega}(x))}{\left|Z^{H}(\A)H(\Q) \cap xK_{\beta}x^{-1}\right|} \in \cD(\Gac, \cO_\Omega),$$
	
	$$\cL_{p, \beta}(\varPhi) \coloneqq \mu(K_{\beta}) \sum_{x \in [H]/K_{\beta}}
	\frac{\widetilde{\kappa_{\ml}}(\varPhi(x))}{\left|Z^{H}(\A)H(\Q) \cap xK_{\beta}x^{-1}\right|} \in \cD(\Gac, L),$$
	
	$$L_p(\phi) \coloneqq \mu(K) \sum_{x \in [H]/K_{\beta}}
	\frac{ev_{\ml}(\phi(x))}{\left|Z^{H}(\A)H(\Q) \cap xKx^{-1}\right|} \in L,$$
	
	and we note that the last definition is just the automorphic $H$-period $\cP_{H}(ev_{\ml}\circ\phi ).$
	
	It is clear that $L_p(\phi)=\alpha_{p}^{-\beta} L_p(\tilde{\phi_{\beta}})$ is independent of $\beta$ for all $\beta>0.$

	By combining Proposition \ref{blprop} and Corollary \ref{blprop} with the results of Section \ref{fam}, we have:
	
	\begin{proposition}
		
		For all accessible characters $\widetilde{\chi_j}$ of infinity type $(-j,j)$ for $j \in \cml$ and conductor $p^{\beta}$ the following diagram commutes
		
		\begin{center}\label{blfinal}
			\begin{tikzcd}
				
				M_p(G,\cD^{\beta}_{\Omega}) \arrow{d}{sp_{\ml}} \arrow{r}{\mathfrak{L}_{p, \beta}}
				& \cD(\Gac, \cO_\Omega) \arrow{d}{sp_{\ml}} \\
				M_p(G,\cD^{\beta}_{\ml}) \arrow{d}{\theta} \arrow{r}{\cL_{p, \beta}}	& \cD(\Gac,L) \arrow{d}{\xi \mapsto \xi(\widetilde{\chi_j})} \\
				M_p(G,\mathcal{V}_{\ml}^{\vee}) \arrow{r}{\cP_{H, \widetilde{\chi_j}}\circ ev_{\mlj}}	& L.
			\end{tikzcd}
			
		\end{center}
		
	\end{proposition}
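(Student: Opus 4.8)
The plan is to verify the two squares separately, each by reducing to the corresponding statement already established on the level of distributions and then summing over cosets. For the top square, observe that by construction $\mathfrak{L}_{p,\beta}$ and $\cL_{p,\beta}$ are the finite sums over $x\in[H]/K_\beta$ of $\widetilde{\kappa_\Omega}\bigl((-)(x)\bigr)$, respectively $\widetilde{\kappa_{\ml}}\bigl((-)(x)\bigr)$, weighted by the rational numbers $\mu(K_\beta)/\lvert Z^{H}(\A)H(\Q)\cap xK_\beta x^{-1}\rvert$, which are independent of the coefficient module. The map $sp_{\ml}\colon\cD(\Gac,\cO_\Omega)\to\cD(\Gac,L)$ is $\cO_\Omega$-linear and continuous, hence commutes with finite sums and with multiplication by these scalars, while the specialisation of a modular form is computed pointwise, so $(sp_{\ml}\Phi_\Omega)(x)=sp_{\ml}\bigl(\Phi_\Omega(x)\bigr)$. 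Since $\beta\geq 1$ forces $\operatorname{Iw}_{G}^{\beta}\subseteq\operatorname{Iw}_{G}^{1}$ and hence $\cD_\Omega^\beta\subseteq\cD_\Omega^1$, the top square of Corollary \ref{blcor} applies to each $\Phi_\Omega(x)$, giving $sp_{\ml}\bigl(\widetilde{\kappa_\Omega}(\Phi_\Omega(x))\bigr)=\widetilde{\kappa_{\ml}}\bigl(sp_{\ml}(\Phi_\Omega(x))\bigr)$; summing over $x$ proves commutativity of the top square.

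For the bottom square, fix $\varPhi\in M_p(G,\cD^\beta_{\ml})$ and an accessible character $\widetilde{\chi_j}$ of infinity type $(-j,j)$ and conductor $p^\beta$, corresponding under $\Zp^\times\cong\Gac$ to $z\mapsto z^j\chi(z)$ with $\chi$ a Dirichlet character of conductor $p^\beta$. Applying the map $\xi\mapsto\xi(\widetilde{\chi_j})$ to $\cL_{p,\beta}(\varPhi)$ and invoking \eqref{integral} term by term yields
$$\cL_{p,\beta}(\varPhi)(\widetilde{\chi_j})=\mu(K_\beta)\sum_{x\in[H]/K_\beta}\frac{\varPhi(x)\bigl(\tilde{u}_{\ml}(z\mapsto z^j\chi(z))\bigr)}{\lvert Z^{H}(\A)H(\Q)\cap xK_\beta x^{-1}\rvert}.$$
Unwinding the definition of $\tilde{u}_{\ml}$, which inserts the test function at the coordinates $1/u_{i+1,i}(g)$ and $v_{i,i}(g)$, upgrades the identity $\tilde{u}_{\ml}(z\mapsto z^j)=u_{(\mu,\la+j)}$ from the proof of Proposition \ref{blprop} to
$$\tilde{u}_{\ml}(z\mapsto z^j\chi(z))(g)=u_{(\mu,\la+j)}(g)\cdot\chi\bigl(c(g)\bigr),\qquad c(g)=\prod_{i=1}^{n}v_{i,i}(g)\Big/\prod_{i=0}^{n-1}u_{i+1,i}(g),$$
so that each summand becomes $\varPhi(x)\bigl(u_{(\mu,\la+j)}\cdot(\chi\circ c)\bigr)$.

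It then remains to match this sum with the sum-over-cosets presentation of the twisted period $\cP_{H,\widetilde{\chi_j}}\bigl(ev_{\mlj}\circ\theta(\varPhi)\bigr)=\int_{[H]}\varPhi(h)\bigl(u_{(\mu,\la+j)}\bigr)\,\chi(\det h)\,dh$, where $\theta(\varPhi)(h)=\tilde{\iota}(\varPhi(h))$ and $ev_{\mlj}\circ\theta(\varPhi)$ is the resulting $\mathcal{V}_j$-valued form on $H$. This is precisely the point where the support condition on $\operatorname{Iw}_{G}^{\beta}$, the generic matrix $g_0$ of Lemma \ref{1+pzp}, and the conjugation by $ut_p^\beta$ built into $K_\beta=(ut_p^\beta)K(ut_p^\beta)^{-1}$ are used: on $\operatorname{Iw}_{G}^{\beta}$ the extra insertion $\chi\circ c$ is exactly what, after conjugating back to level $K$, reassembles into the twist $\chi(\det h)$ together with the normalising power of $\alpha_p$, and it is also what makes $h\mapsto\varPhi(h)\bigl(u_{(\mu,\la+j)}\cdot(\chi\circ c)\bigr)$ constant on the cosets $xK_\beta$, so that the integral genuinely collapses to the finite sum above. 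In substance this is the split-prime local Birch-lemma computation underlying Proposition \ref{testvectors}, transported to the distribution side, and I expect it to be the one genuinely non-formal step; the top square and the reductions to \eqref{integral} and Corollary \ref{blcor} are formal, and the results of Section \ref{fam} enter only to make sense of the spaces $M_p(G,\cD^\beta_\Omega)$ and of the specialisation maps between them.
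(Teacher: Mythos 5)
Your argument for the top square is the paper's argument, merely spelled out: commutativity reduces termwise, via linearity of $sp_{\ml}$, to the top square of Corollary \ref{blcor} applied to each $\Phi_\Omega(x)$.

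For the bottom square you work in the opposite direction from the paper. The paper first rewrites the twisted period as the finite sum
$\cP_{H,\widetilde{\chi_j}}(\phi)=\mu(K_\beta)\sum_{x\in[H]/K_\beta}\widetilde{\chi_j}(\det x)\,ev_{\mlj}(\phi(x))\big/\lvert Z^H(\A)H(\Q)\cap xK_\beta x^{-1}\rvert$
and then cites Proposition \ref{blprop} together with \eqref{integral}; you instead start from $\cL_{p,\beta}(\varPhi)(\widetilde{\chi_j})$, use \eqref{integral} to pass to the $\Zp^\times$-side, and compute $\tilde u_{\ml}(z\mapsto z^j\chi(z))(g)=u_{(\mu,\la+j)}(g)\,\chi(c(g))$ with $c(g)=\prod_i v_{i,i}(g)\big/\prod_i u_{i+1,i}(g)$. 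That identity is correct and in fact more explicit than anything written in the paper's proof, which never makes the $\chi\circ c$ factor visible. Both routes bottleneck at the same identification of $\chi\circ c$ on the support $\operatorname{Iw}_G^\beta$ with the external twist $\widetilde{\chi_j}\circ\det$, and neither your write-up nor the paper's lays this out in full.

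Two points of confusion in your final paragraph are worth correcting. First, this proposition is a purely algebraic diagram-chase on the distribution side; the split-prime local Birch lemma does not enter here — it is the engine of Proposition \ref{testvectors}, where the period is actually related to an $L$-value. Second, no power of $\alpha_p$ appears in the bottom square: the renormalisation by $\alpha_p^{-\beta}$ belongs to the later definition $\mathfrak{L}_p(\Phi)=\mathfrak{L}_{p,\beta}(\alpha_p^{-\beta}\tilde\Phi_\beta)$, whereas the proposition is stated directly for $\cL_{p,\beta}$ at level $K_\beta$; bringing $\alpha_p$ into the matching here would double-count it. With those two misattributions removed, your reduction is essentially the paper's argument read backwards, and the explicit formula for $\tilde u_{\ml}(z\mapsto z^j\chi(z))$ is a useful addition; but as you yourself flag, the final step identifying $\chi\circ c$ with $\widetilde{\chi_j}\circ\det$ on $\operatorname{Iw}_G^\beta$ still needs to be carried out rather than asserted.
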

	
	\begin{proof}
		By Corollary \ref{blcor} we have that the top square commutes since $$sp_{\ml}\left(\mathfrak{L}_{p, \beta}({\Phi_\Omega})\right)=\cL_{p, \beta}(sp_{\ml}\left({\Phi}\right)).$$
		
		For the bottom square, first note that we can rewrite $\cP_{H,\widetilde{\chi_j}}$ so that it's suitable for $p$-adic interpolation. We have for $\phi$ of level $K_{\beta}$
		
		$$
		\cP_{H, \widetilde{\chi_j}}(\phi)= \mu(K_{\beta})  \sum\limits_{x \in [H]/K_{\beta}} \frac{\widetilde{\chi_j}(\operatorname{det}(x)){ev_{\mlj}(\phi(x))}}{\left|Z^{H}(\A)H(\Q) \cap xK_{\beta}x^{-1}\right|}. 
		$$
		
		The bottom square thus commutes by combining Proposition \ref{blprop} with equation (\ref{integral}), which give
		
		$$\cP_{H, \widetilde{\chi_j}}({\theta}(\varPhi))= \int_{\Gamma^{\text{ac}}} \widetilde{\chi_j} d\cL_{p, \beta}(\varPhi).$$

	\end{proof}
For $\varPhi \in M_p(G,\cD_{\ml})$ with $U_p$-eigenvalue $a_p$, define $$\cL(\varPhi)\coloneqq a_p^{-\beta}\cL_{p,\beta}\tilde{\varPhi_\beta}.$$

The rest of this section aims to explain how this is a $p$-adic $L$-function. One of the key steps is the following proposition:
    
    \begin{proposition}\label{growth}
        For an eigenfunction ${\varPhi} \in M_p(G,\cD_{\ml})$ with $U_p$-eigenvalue $a_p$, the distribution $$\cL_{p}({\varPhi})\in \cD(\Gac, L)$$ has growth $u_p(a_p).$
    \end{proposition}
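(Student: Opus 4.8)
The plan is to reduce the growth bound to a uniform norm estimate on the ``partial'' distributions $\cL_{p,r}(\tilde\varPhi_r)$, using the level‑lowering identity and the support‑shrinking phenomenon.

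Recall that growth is measured through the nested Banach spaces $\cD(\Zp,L)[r]$; under the fixed isomorphism $\Gac\cong\Zp^{\times}$ it suffices to produce a constant $C\geq 0$, independent of $r\in\Z_{\geq 0}$, with $\lvert\cL_p(\varPhi)\rvert_r\leq C\,p^{r\,v_p(a_p)}$ on $\cD(\Gac,L)[r]\cong\cD(\Zp^{\times},L)[r]$. The first step is to use, for each fixed $r$, the identity $\cL_p(\varPhi)=a_p^{-\beta}\cL_{p,\beta}(\tilde\varPhi_\beta)$ (valid for every $\beta>0$), evaluated at $\beta=r$; the finitely many small values of $r$ below any fixed bound are absorbed into $C$ via the inclusions $\cA(\Zp^{\times},L)[r]\subset\cA(\Zp^{\times},L)[r+1]$. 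Since $\lvert a_p^{-r}\rvert_p=p^{r\,v_p(a_p)}$, this reduces the claim to the bound $\lvert\cL_{p,r}(\tilde\varPhi_r)\rvert_r\leq C$, uniformly in $r$. Note that we are forced to work with the level‑$r$ incarnation: for a test function $f\in\cA(\Zp^{\times},L)[r]$ the function $\tilde u_{\ml}(f)$ lands precisely in $\cA^{r}_{\ml}$, which is the space on which the distribution $\tilde\varPhi_r(x)\in\cD^{r}_{\ml}$ is defined.

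Next I would unwind $\cL_{p,r}(\tilde\varPhi_r)(f)$ as the finite sum $\mu(K_r)\sum_{x\in[H]/K_r}\lvert Z^{H}(\A)H(\Q)\cap xK_rx^{-1}\rvert^{-1}\,\tilde\varPhi_r(x)\bigl(\tilde u_{\ml}(f)\bigr)$ and estimate it through two inputs. The first is a support‑shrinking estimate: since $\tilde\varPhi_r(x)$ is supported on $\operatorname{Iw}_{G}^{r}$, and on $\operatorname{Iw}_{G}^{r}$ (equivalently on $N^{r}(\Zp)$, where $g\equiv g_0\ (\mathrm{mod}\ p^{r})$) the coordinates $u_{i+1,i}(g)$ and $v_{i,i}(g)$ all lie in $1+p^{r}\Zp$ by the refinement of Lemma \ref{1+pzp} to $N^{r}$, the value $\tilde u_{\ml}(f)(g)$ only involves $f$ restricted to the single coset $1+p^{r}\Zp$; as the remaining factors $u_{i+1,i}(g)^{\mu_i-\la_i}$, $v_{i,i}(g)^{\la_i-\mu_{i+1}}$, $u_{n+1,n}(g)^{\mu_{n+1}}$ are units (non‑negative exponents by interlacing), one obtains $\lvert\tilde u_{\ml}(f)\rvert_{\cA^{r}_{\ml}}\leq\lvert f\rvert_r$. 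The second input is that the weighted translated form $x\mapsto\mu(K_r)\,\lvert Z^{H}(\A)H(\Q)\cap xK_rx^{-1}\rvert^{-1}\,\tilde\varPhi_r(x)=\mu(K_r)\,\lvert\cdots\rvert^{-1}\,\pi(u\,t_p^{r})\varPhi(x)$ pairs against $\cA^{r}_{\ml}$ with a bound uniform in $r$: here $t_p$ contracts the unipotent radical $N$ under conjugation, so the $t_p^{r}$‑translation increases the radius of analyticity, and the precise exponents make the growth of $\pi(t_p^{r})\varPhi(x)$ cancel against the volume and index factors, the residual constant staying bounded because $\alpha_p^{\circ}=\ml(t_p)\,a_p$ — hence $\beta_p^{\circ}$ and $\gamma_p^{\circ}$ — are $p$‑integral (Definition \ref{crit} and the remark after it). Combining the two inputs via $\lvert\tilde\varPhi_r(x)(\tilde u_{\ml}(f))\rvert\leq\lvert\tilde\varPhi_r(x)\rvert\cdot\lvert\tilde u_{\ml}(f)\rvert$ and summing gives the uniform bound on $\lvert\cL_{p,r}(\tilde\varPhi_r)\rvert_r$.

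I expect the main obstacle to be this second input — the uniform‑in‑$r$ control of the translate $\pi(u\,t_p^{r})\varPhi$ together with the combinatorial prefactors. Because $t_p\notin\operatorname{Iw}_G$, the symbol $\pi(t_p)$ is not literally an operator on $\cA_{\ml}$, so the argument has to be run through the $U_p$‑correspondence and the nested family $\{\cD^{\beta}_{\ml}\}_{\beta}$, checking that the contraction of $N$ is exactly strong enough to offset the growth of the relevant volumes and that the normalisation $U_p^{\circ}=\ml(t_p)U_p$ keeps the leftover constant $p$‑integral rather than growing like $p^{cr}$. The remaining point — fixing mutually compatible Banach norms on the $\cD^{\beta}_{\ml}$ and matching the norm on $\cD(\Gac,L)[r]\cong\cD(\Zp^{\times},L)[r]$ with the automorphic‑side norms transported by the dual map $\widetilde{\kappa_{\ml}}$ — is routine bookkeeping once the contraction estimate is in hand.
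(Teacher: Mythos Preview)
Your reduction via $\cL_p(\varPhi)=a_p^{-r}\cL_{p,r}(\tilde\varPhi_r)$ and your first input $\lvert\tilde u_{\ml}(f)\rvert\leq\lvert f\rvert_r$ match the paper exactly. Where you diverge is the second input: you anticipate having to balance the growth of $\pi(u\,t_p^{r})\varPhi$ against volume and index factors, invoking the $p$-integrality of $\alpha_p^{\circ}$ and threading the argument through the $U_p$-correspondence. The paper does none of this. After reducing the finite sum to a single term, it runs the chain
\[
\sup_{\substack{g\in\cA_{\ml}[r]\\ \lvert g\rvert_r=1}}\lvert\tilde\varPhi_r(x)(g)\rvert
\;\leq\;
\sup_{\substack{g\in\cA_{\ml}[1]\\ \lvert g\rvert_1=1}}\lvert\tilde\varPhi_r(x)(g)\rvert
\;=\;\lvert\tilde\varPhi_r(x)\rvert_1
\;\leq\;\lvert\varPhi(x)\rvert_1,
\]
and then takes $C=\max_x\lvert\varPhi(x)\rvert_1$, which is finite because $\varPhi$ is determined by its values on the finite set $G(\Q)\backslash G(\A)/(G(\R)_+\times K')$. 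The passage from $\lvert\cdot\rvert_r$ to $\lvert\cdot\rvert_1$ together with the estimate $\lvert\tilde\varPhi_r(x)\rvert_1\leq\lvert\varPhi(x)\rvert_1$ is precisely the ``$t_p$ contracts $N$'' phenomenon you describe, but packaged as two one-line norm inequalities rather than an explicit cancellation against combinatorial prefactors. In particular no appeal to $\alpha_p^{\circ}$ is made, and your worry that $\pi(t_p)$ is not literally an operator on $\cA_{\ml}$ is moot: the $*$-action of $t_p$ on $\cD_{\ml}$ is well-defined exactly because $t_p$ contracts $N(\Zp)$, which is the standard mechanism in overconvergent cohomology. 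So your plan is correct, but the obstacle you flag is considerably softer than you expect, and the finiteness of the values of $\varPhi$ is what ultimately produces the constant.
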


    \begin{proof}

  As explained in Section \ref{distributions}, $\cD(\Gac, L) \subset \cD(\Gac, L)[r]$ for all $r$ and thus comes equipped with a norm $\lvert \cdot \rvert_r$ as a Banach space.
  We want to show that $\cL_{p}({\Phi})$ has growth $h=u_p(a_p),$ which is equivalent to showing that for all there exists constant $C$ such that for all $r$

  $$\lvert\cL_{p}({\Phi})\rvert_{r} \leq Cp^{rh}.$$

  We have 
\begin{align*}
      \lvert\cL_{p}({\varPhi})\rvert_{r}=\lvert a_p^{-r}\cL_{p,r}(\tilde{\varPhi_r})\rvert_{r} \\
      =p^{rh}\lvert \cL_{p,r}(\tilde{\varPhi_r})\rvert_{r}.
  \end{align*}

     Since

     $$\cL_{p, r}(\tilde{\varPhi}_r) \coloneqq \mu(K_{r}) \sum_{x \in [H]/K_{r}}
	\frac{\widetilde{\kappa_{\ml}}(\tilde{\varPhi}_r(x))}{\left|Z^{H}(\A)H(\Q) \cap xK_{r}x^{-1}\right|} \in \cD(\Gac, L),$$

it is enough to show that there exists constant $C$ independent of $r$ such that for all $x$:

    $$\lvert\widetilde{\kappa_{\ml}}(\tilde{{\varPhi}}_r(x))\rvert_r \leq C,$$ 

    or equivalently

    $$\lvert{\kappa_{\ml}}(\tilde{{\varPhi}_r}(x))\rvert_r \leq C.$$ 

    We now have:
\begin{equation*}
\begin{aligned}
    \lvert{\kappa_{\ml}}(\tilde{\varPhi}_r(x))\rvert_r= &\sup_{f \in \cA(\Zp^{\times}, L)[r]} \frac{ \lvert{\kappa_{\ml}}(\tilde{\varPhi}_r(x))(f)\rvert}{\lvert f \rvert_r} \\
    =&\sup_{\substack{f \in \cA(\Zp^{\times}, L)[r]\\ \lvert f\rvert_r=1}} \lvert{\kappa_{\ml}}(\tilde{\varPhi}_r(x))(f)\rvert\\
    =&\sup_{\substack{f \in \cA(\Zp^{\times}, L)[r]\\ \lvert f\rvert_r=1}}\lvert\tilde{\varPhi}_r(x)(\tilde{u}_{\ml}(f))\rvert\\
    \leq & \sup_{\substack{g \in \cA_{\ml}[r]\\ \lvert g\rvert_r=1}}\lvert\tilde{\varPhi}_r(x)(g)\rvert \\
    \leq & \sup_{\substack{g \in \cA_{\ml}[1]\\ \lvert g\rvert_1=1}}\lvert\tilde{\varPhi}_r(x)(g)\rvert \\
    = &\lvert \tilde{\varPhi}_r(x)\rvert_1 \\
    \leq & \lvert \varPhi (x) \rvert_1 \leq C,\\
    \end{aligned}
\end{equation*}
    
where the last inequality follows since $\varPhi (x)$ can take finitely many values, so we take $C=\max_{x}\{\lvert \varPhi (x) \rvert_1\}.$

    \end{proof}

	We will now assume that the basis for the 1-dimensional space $\ps_f^K$ is an appropriate test vector, as in Proposition \ref{testvectors}. 
	
	\begin{remark}\label{remarktv}
		This is not known in generality. Recent work of \cite[Corollary 1.3]{Dang} shows that the multiplicity one assumption implies that the $K_l$-fixed vector is a test vector, under the additional assumption that the representation $\sigma_l$ of $U_n((\Q_l)$ is unramified.
	\end{remark}

	We are now ready to prove Theorem \ref{mainthm} of the introduction. 
	\begin{theorem}\label{Mthm}
		There exists a unique up to scalars non-zero function
		$$\mathfrak{L}_p : M_p(G,\cD_{\Omega})\to \cD(\Gamma^{\text{ac}}, \cO_\Omega)$$ 	 
		of growth $v_p(\alpha),$ such that for a classical point $\ml \in \Omega$ whose corresponding automorphic representation $\ps$ satisfies ($\bigstar$), integrating $sp_{\ml}\left(\mathfrak{L}_p(\Phi_{\Omega})\right)$ against an accessible character $\chi \in \cX(\Gamma^{\text{ac}})$ of conductor $p^{\beta}$ with $\beta>0$, interpolates the square root of the central critical value $L(1/2, \pi\times(\sigma \otimes \chi)).$ In particular, there exists test vector $\phi$, such that if we set
		
		$$\cL_p(\phi)=sp_{\ml}\left(\mathfrak{L}_p(\Phi_{\Omega})\right) \text{  and  
 }\cL_p(\widetilde{\ps}, \chi) \coloneqq \int_{\Gamma^{\text{ac}}} \chi(x) d\cL_{p}(\phi),$$ then
		
		\begin{align}
			\frac{|\cL_p (\widetilde{\ps}, \chi)|^2}{\langle \tilde{\phi}_{\beta}, \tilde{\phi}_{\beta}^\vee\rangle}=&\frac{\Delta}{S_{\pi\times\sigma}} \frac{L(1/2, \pi\times\left(\sigma\otimes\chi\right))}{L(1,\pi,\mathrm{Ad})L(1,\sigma,\mathrm{Ad})}\nonumber\\
			&c \prod_{i=1}^n\left(1-p^{-i}\right)^{-2} \cdot\left(\frac{	p^{-\frac{n(n+1)(2n+1)}{6}}}{\alpha_p}
			\right)^{\beta} \left(i_1 \phi\right)(1) \left(i_2 \phi^{\vee}\right)(1).\nonumber
		\end{align}

	\end{theorem}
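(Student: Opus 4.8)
The plan is to assemble the statement from the ingredients already in place: the $p$-adically interpolated branching law (Proposition~\ref{blprop}, Corollary~\ref{blcor}), the Coleman family $\PO$ of Section~\ref{fam}, the Gan--Gross--Prasad formula together with the explicit local computation at $p$ (Proposition~\ref{testvectors}), and the growth estimate of Proposition~\ref{growth}. I would define $\mathfrak{L}_p$ on the rank-one $\cO_\Omega$-module $M_p(G,\cD_\Omega)$ (localised at $\widetilde{\ps_\alpha}$) by $\mathfrak{L}_p(\PO)\coloneqq a_p^{-\beta}\,\mathfrak{L}_{p,\beta}(\widetilde{\PO_\beta})$, where $a_p\in\cO_\Omega^\times$ is the $U_p$-eigenvalue of the family and $\beta>0$ is arbitrary, and extend $\cO_\Omega$-linearly. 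Independence of $\beta$ is checked as in the classical case recorded in Section~\ref{plf}: passing from level $K_\beta$ to $K_{\beta+1}$ (of index $p^{n(n+1)(n+2)/3}$) and from $\widetilde{\PO_\beta}$ to $\widetilde{\PO_{\beta+1}}$, the sum defining $\mathfrak{L}_{p,\beta+1}$ over the refined cosets reproduces $\mathfrak{L}_{p,\beta}$ applied to $U_p\widetilde{\PO_\beta}=a_p\widetilde{\PO_\beta}$, so the factor $a_p^{-\beta}$ compensates exactly.

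For the growth, I would run the family version of the argument of Proposition~\ref{growth}. Writing $\mathfrak{L}_p(\PO)=a_p^{-r}\mathfrak{L}_{p,r}(\widetilde{\PO_r})$ and using that (after shrinking $\Omega$) the Coleman family has constant slope $v_p(\alpha)$, one has $\lvert a_p^{-r}\rvert\le p^{r v_p(\alpha)}$, while $\lvert\mathfrak{L}_{p,r}(\widetilde{\PO_r})\rvert_r$ is bounded independently of $r$ by the same chain of estimates as in Proposition~\ref{growth}: the map $\tilde u_\Omega$ sends the unit ball of $\cA(\Zp^\times,\cO_\Omega)[r]$ into the unit ball of $\cA_\Omega[1]$, and $\PO$ takes only finitely many (uniformly bounded) values on $[H]/K_r$. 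Hence $\mathfrak{L}_p(\PO)$ has growth $v_p(\alpha)$.

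For the interpolation, fix a classical $\ml\in\Omega$ with $\ps$ satisfying $(\bigstar)$ and an accessible $\chi=\widetilde{\chi_j}$ of infinity type $(-j,j)$, $j\in\cml$, of conductor $p^\beta$ with $\beta>0$. Set $\varPhi\coloneqq sp_{\ml}(\PO)$ and $\alpha_p\coloneqq a_p(\ml)$. By the top square of Proposition~\ref{blfinal} (i.e.\ Corollary~\ref{blcor}), $sp_{\ml}\bigl(\mathfrak{L}_p(\PO)\bigr)=\alpha_p^{-\beta}\,\cL_{p,\beta}(\widetilde{\varPhi_\beta})$; integrating $\chi$ against this and applying the bottom square of Proposition~\ref{blfinal} together with \eqref{integral} gives $\int_{\Gac}\chi\,d\,sp_{\ml}(\mathfrak{L}_p(\PO))=\alpha_p^{-\beta}\,\cP_{H,\chi}\bigl(\theta(\widetilde{\varPhi_\beta})\bigr)$, where $\theta$ is the classicality isomorphism coming from \eqref{eq2}. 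By the test-vector hypothesis stated before the theorem (see Remark~\ref{remarktv}), the generator of $\ps_f^K$ that spans $\theta(\varPhi)$ is a test vector as in Proposition~\ref{testvectors}, so $\theta(\widetilde{\varPhi_\beta})$ agrees, up to the fixed normalisation, with $\widetilde{\phi_\beta}=\otimes_{\nu\ne p}\phi_\nu\otimes\pi(u\,t_p^\beta)\phi_p$. Therefore $\cL_p(\widetilde{\ps},\chi)=\alpha_p^{-\beta}\,\cP_{H,\chi}(\widetilde{\phi_\beta})$, and squaring and substituting the value of $\lvert\cP_{H,\chi}(\widetilde{\phi_\beta})\rvert^2$ from Proposition~\ref{testvectors}, collecting the powers of $\alpha_p$ into the factor $\bigl(p^{-n(n+1)(2n+1)/6}/\alpha_p\bigr)^\beta$, produces the displayed formula.

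Uniqueness up to scalar follows because $M_p(G,\cD_\Omega)$ (localised at $\widetilde{\ps_\alpha}$) is free of rank one over $\cO_\Omega$ by Section~\ref{fam}: an $\cO_\Omega$-linear $\mathfrak{L}_p$ of the asserted type is determined by $\mathfrak{L}_p(\PO)$, the interpolation identity pins down every classical specialisation $sp_{\ml}(\mathfrak{L}_p(\PO))$ up to the fixed nonzero test-vector constants, and since classical points are Zariski-dense in $\Omega$ this determines $\mathfrak{L}_p(\PO)$, leaving only the scalar attached to the choice of $\PO$. Non-vanishing is immediate from $(i_1\phi)(1)(i_2\phi^\vee)(1)\ne 0$ for a test vector, which forces $\mathfrak{L}_p(\PO)$ to be a nonzero distribution. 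I expect the real obstacle to lie in the interpolation step: identifying $\theta(\widetilde{\varPhi_\beta})$ with the precise local vector $\pi(u\,t_p^\beta)\phi_p$ at $p$ and the prescribed vectors away from $p$ rests on the (not yet general) hypothesis that the level-$K$ fixed line is a test line and on the compatibility of the classicality isomorphism $\theta$ of \eqref{eq2} with the branching-law evaluation $ev_{\mlj}$; a secondary source of care is the bookkeeping of the $\circ$-normalised $U_p$-eigenvalues, so that the growth exponent and the $\beta$-power in the final formula come out exactly as stated.
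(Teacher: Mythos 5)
Your construction of $\mathfrak{L}_p$, the independence-of-$\beta$ check, the growth estimate, and the interpolation computation via the commutative diagram of Proposition~\ref{blfinal} all track the paper's proof closely. The issue is the uniqueness argument, which as written has a genuine gap. You claim that ``the interpolation identity pins down every classical specialisation $sp_{\ml}(\mathfrak{L}_p(\Phi_\Omega))$,'' but the interpolation formula only prescribes the values of $sp_{\ml}(\mathfrak{L}_p(\Phi_\Omega))$ against the finitely many families of accessible characters $\widetilde{\chi_j}$ with $j\in\cml$ and positive conductor; it does not by itself determine the distribution $sp_{\ml}(\mathfrak{L}_p(\Phi_\Omega))\in\cD(\Gamma^{\mathrm{ac}},L)$. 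Recovering a distribution from such a finite supply of twisted moments is precisely the content of the Amice--V\'elu/Vishik theorem, and it requires the growth bound $v_p(\alpha_p)<h_{\ml}$ (the ``very small slope'' condition). The paper's uniqueness argument invokes exactly this: the growth bound from Proposition~\ref{growth} together with the Zariski density in $\Omega$ of classical points with $v_p(\alpha)<h_{\ml}$, citing Vishik and Amice--V\'elu. Your argument establishes the growth bound earlier but never uses it where it matters; you also locate the essential difficulty in the interpolation step, whereas that part of your sketch is fine and the real delicacy lies in this admissibility argument.

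One smaller omission: the paper's proof includes a short Claim verifying that $(g_0,1_n)$ and $(\xi,1_n)$ lie in the same $\overline{B}\backslash G/H$-orbit. This is what makes the support condition used in defining $u_\Omega$ and $\kappa_\Omega$ (built around the element $g_0$) compatible with the normalisation of the test vector $\widetilde{\Phi}_\beta=\otimes_{\nu\ne p}\Phi_\nu\otimes\pi((\xi,1_n)t_p^\beta)\Phi_p$, and hence makes the sum defining $\mathfrak{L}_{p,\beta}(\widetilde{\Phi}_\beta)$ actually land in the correct distribution space and agree with the branching-law evaluation at classical points. You assert well-definedness but do not address this matching; it is a short computation and should be included.
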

	
	\begin{proof}

		Consider an element $\phi \in M\left(G, \mathcal{V}_{\ml}^{\vee}\right)$, such that $\phi \circ ev_{u_{(\mu,\la)}} \in \ps$ is a test vector as in the proof of Proposition \ref{testvectors}. We know by Section \ref{ggp} and the above  assumption on the test vectors that
		
		\begin{align}
			\frac{|\cP_{H, \chi}(\tilde{\phi}_{\beta} \circ ev_{u_{(\mu,\la)}})|^2}{\langle \tilde{\phi}_{\beta}, \tilde{\phi}_{\beta}^\vee\rangle}=&\frac{\Delta}{S_{\pi\times\sigma}} \frac{L(1/2, \pi\times\left(\sigma\otimes\chi\right))}{L(1,\pi,\mathrm{Ad})L(1,\sigma,\mathrm{Ad})} \nonumber \\
			&c \prod_{i=1}^n\left(1-p^{-i}\right)^{-2} \cdot\left(p^{-\-\frac{n(n+1)(2n+1)}{6}}\right)^{\beta}\left(i_1 \phi\right)(1) \left(i_2 \phi^{\vee}\right)(1). \nonumber
		\end{align}
		
		Define $\mathfrak{L}_{p} : M_p(G, \cD_{\Omega})^{\alpha} \to \cD(\Gamma^{\text{ac}}, \cO_\Omega)$ via
		$$\mathfrak{L}_{p}({\Phi})=\mathfrak{L}_{p, \beta}((\alpha_p)^{-\beta}\tilde{\Phi}_{\beta})=\frac{\mu(K_{\beta}) }{\alpha_{p}^{\beta}}\sum_{x \in [H]/K_{\beta}}  \frac{\widetilde{\kappa_{\Omega}}(\tilde{\Phi}_{\beta} (x))}{\left|Z^{H}(\A)H(\Q)  \cap x^{-1}K_{\beta} x\right|},$$
		
		where $\tilde{\Phi}_{\beta}=\otimes_{\nu \neq p}\Phi_{\nu}\otimes\pi\left(\left(\xi, 1_n\right) \cdot t_p^{\beta}\right) \Phi_{p}$. This definition is independent of the choice of $\beta$ and is well defined by the following lemma:
  \begin{claim}
       The element $(g_0, 1_n)$ belongs to the same $\bar{B}\backslash G /H$-orbit as $(\xi,1_n).$
    \end{claim}

  \begin{proof}
      	For an integer $k\geq 1$, let $w_k$ denote the $k \times k$ matrix with 1s in the anti-diagonal entries and zeros everywhere else.
		A simple calculation shows that the element $g_0=\begin{psmallmatrix}
			1 & 0 & \cdots &0 &1 \\
			0 & 1 & \cdots &0 &1 \\
			\vdots  & \vdots  &  & \ddots &\vdots \\
			0 & 0 & \cdots & 0 &1 \\
		\end{psmallmatrix}$ lies in the same $\overline{B}\backslash G /  H $-orbit as $\xi=\begin{pmatrix}
			& w_{n} &	&  &\begin{matrix} 1 \\ 1 \\ \vdots \end{matrix} \\ 
			0 & \cdots && 0 &1 \\
		\end{pmatrix}$ since 
		
		$$\begin{psmallmatrix}
			1 & 0 & \cdots &0 &1 \\
			0 & 1 & \cdots &0 &1 \\
			\vdots  & \vdots  &  & \ddots &\vdots \\
			0 & 0 & \cdots & 0 &1 \\
		\end{psmallmatrix} \times \begin{pmatrix}
			& w_{n} &	&  &\begin{matrix} 0 \\ 0 \\ \vdots \end{matrix} \\ 
			0 & \cdots && 0 &1 \\
		\end{pmatrix} = \begin{pmatrix}
			& w_{n} &	&  &\begin{matrix} 1 \\ 1 \\ \vdots \end{matrix} \\ 
			0 & \cdots && 0 &1 \\
		\end{pmatrix}.$$
  \end{proof}

		Recall that in the beginning of the proof, we fixed a test vector $\phi \in M_p(G,\mathcal{V}_{\ml}^{\vee})$ as in the proof of Proposition \ref{testvectors}. By the isomorphism $\theta$, this lifts to $\varPhi \in M_p(G,\cD_{\ml})$ and by Section \ref{fam}, we have a Coleman family $\PO$ over it. 
		
		For an accessible character  $\chi \in \cX(\Gamma^{\text{ac}})$ of conductor $p^{\beta}$ with $\beta>0$ of infinity type $(-j,j)$, we can integrate $\chi(x)$ against $\cL_{p}(\varPhi)=sp_{\ml} \mathfrak{L}_p(\PO),$ for $sp_{\ml}(\PO)=\varPhi \in  M_p(G, \cD_{\ml})$. Therefore, we set
		
		\begin{align}
			\cL_p(\widetilde{\ps}, \chi) & \coloneqq \int_{\Gamma^{\text{ac}}} \chi(x) d\cL_{p}(\varPhi)\nonumber \\
			&=(\alpha_p)^{-\beta} \int_{\Gamma^{\text{ac}}} \chi(x) d\cL_{p, \beta}(\tilde{\varPhi_{\beta}}). \nonumber
		\end{align}

		Finally, we apply Proposition \ref{blfinal} and Proposition \ref{testvectors} to get
		
		\begin{align}
			\frac{|\cL_p (\widetilde{\ps}, \chi)|^2}{\langle \tilde{\phi}_{\beta}, \tilde{\phi}_{\beta}^\vee\rangle}=&\frac{\Delta}{S_{\pi\times\sigma}} \frac{L(1/2, \pi\times\left(\sigma\otimes\chi\right))}{L(1,\pi,\mathrm{Ad})L(1,\sigma,\mathrm{Ad})} \nonumber \\
			&c \prod_{i=1}^n\left(1-p^{-i}\right)^{-2} \cdot\left(\frac{	p^{-\frac{n(n+1)(2n+1)}{6}}}{\alpha_p}\right)^{\beta} \left(i_1 \phi\right)(1) \left(i_2 \phi^{\vee}\right)(1).\nonumber
		\end{align}

		The growth condition of Proposition \ref{growth} and the Zariski density of classical points satisfying $v_p(\alpha)<h_{\ml}$ (as explained for example in \cite{Hansen}) determine the $p$-adic $L$-function uniquely as shown in  \cite[Thm 2.3 and Lemma 2.10]{Vishik} (also proved independently in \cite{AV}). 		
	\end{proof}
	
	We now remove the multiplicity one condition on the level and we prove Theorem \ref{thm1}, as a direct consequence of the construction outlined above:
	
	\begin{theorem}\label{theorem1}
		There exists a distribution $\cL_p(\widetilde{\ps}) \in \cD(\Gamma^{\text{ac}}, L)$ of growth $v_p(\alpha)$ such that for all accessible characters $\chi \in \cX(\Gamma^{\text{ac}})$ of conductor $p^{\beta}$ with $\beta>0$, if we set
		
		$$\cL_p(\widetilde{\ps}, \chi) \coloneqq \int_{\Gamma^{\text{ac}}} \chi(x) d\cL_{p}(\widetilde{\ps}),$$ 
		
		then 
		
		\begin{align}
			\frac{|\cL_p (\widetilde{\ps}, \chi)|^2}{\langle \tilde{\phi}_{\beta}, \tilde{\phi}_{\beta}^\vee\rangle}=&\frac{\Delta}{S_{\pi\times\sigma}} \frac{L(1/2, \pi\times\left(\sigma\otimes\chi\right))}{L(1,\pi,\mathrm{Ad})L(1,\sigma,\mathrm{Ad})}\nonumber\\
			&c \prod_{i=1}^n\left(1-p^{-i}\right)^{-2} \cdot\left(\frac{	p^{-\frac{n(n+1)(2n+1)}{6}}}{\alpha_p}
			\right)^{\beta} \left(i_1 \phi\right)(1) \left(i_2 \phi^{\vee}\right)(1).\nonumber
		\end{align}

		If moreover, $v_p(\alpha)< h_{\ml}$, then $\cL_p(\widetilde{\ps})$ is uniquely determined by the above interpolation formula.
		
	\end{theorem}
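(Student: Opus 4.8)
The plan is to reduce Theorem \ref{theorem1} to the machinery already assembled for Theorem \ref{Mthm} by taking the affinoid $\Omega = \{\ml\}$. First I would observe that although the multiplicity one hypothesis on the level $K$ was invoked in Section \ref{fam} to produce a Coleman \emph{family}, the single-weight construction does not need it: for a fixed $p$-refinement $\widetilde{\ps}$ with $U_p$-eigenvalue $a_p = \alpha$, one works directly in $M_p(G, \cD_{\ml})$ and picks an eigenvector $\varPhi$ for the Hecke action lying in the $\widetilde{\ps}$-eigenspace. (If that eigenspace is not a line, simply fix any such $\varPhi$; the resulting distribution is then only well-defined up to the choice of $\varPhi$, but the interpolation formula still holds for that choice.) One must still check that $\varPhi$ is a \emph{distribution-valued} lift of a classical form, which is where the non-critical slope condition $v_p(\beta_p^\circ) < \min_i(\mu_i - \mu_{i+1}+1)$ and $v_p(\gamma_p^\circ) < \min_j(\la_j - \la_{j+1}+1)$ from Definition \ref{crit} enters via \cite[Thm 4.4]{BW21}: this gives the classicality isomorphism $M_p(G, \cD_{\ml})_{\widetilde{\ps_\alpha}} \xrightarrow{\sim} M_p(G, \mathcal{V}^\vee_{\ml})_{\widetilde{\ps_\alpha}}$, so $\varPhi$ corresponds to a genuine $\phi \in M_p(G, \mathcal{V}^\vee_{\ml})$.

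Next I would set $\cL_p(\widetilde{\ps}) \coloneqq \cL_p(\varPhi) = a_p^{-\beta}\, \cL_{p,\beta}(\tilde{\varPhi}_\beta)$, using the definitions of Section \ref{plf}, and note this is independent of $\beta$ exactly as argued there (the level-lowering index $[K_\beta:K_{\beta+1}] = p^{n(n+1)(n+2)/3}$ cancels against the $U_p$-eigenvalue). The growth assertion is immediate from Proposition \ref{growth}, which states precisely that $\cL_p(\varPhi)$ has growth $v_p(a_p) = v_p(\alpha)$; no family is needed for that proposition, as its proof only manipulates the norms $|\varPhi(x)|_1$ over the finite set $[H]/K_\beta$. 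For the interpolation formula, I would integrate an accessible character $\chi$ of infinity type $(-j,j)$ and conductor $p^\beta$ against $\cL_p(\varPhi)$, apply the bottom square of Proposition \ref{blfinal} to rewrite $\int_{\Gac}\chi\, d\cL_{p,\beta}(\tilde{\varPhi}_\beta)$ as the twisted automorphic period $\cP_{H,\widetilde{\chi_j}}(\tilde{\phi}_\beta \circ ev_{u_{(\mu,\la)}})$ (up to the power of $\alpha_p$), and then quote Proposition \ref{testvectors} together with the running assumption (Remark \ref{remarktv}) that the $K$-fixed vector is an appropriate test vector. This yields precisely the displayed identity for $|\cL_p(\widetilde{\ps},\chi)|^2 / \langle \tilde{\phi}_\beta, \tilde{\phi}_\beta^\vee\rangle$.

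Finally, for uniqueness under $v_p(\alpha) < h_{\ml}$, I would argue that since $v_p(\alpha) < h_{\ml} = \#\cml - 1$ (or $\#\cml$ depending on the normalisation), the set of accessible characters $\{x^j \chi : j \in \cml,\ \chi \text{ of conductor } p^\beta,\ \beta > 0\}$ is large enough that a distribution of growth $v_p(\alpha)$ is pinned down by its values on them — this is the Vishik/Amice--Vélu uniqueness principle \cite[Thm 2.3, Lemma 2.10]{Vishik}, \cite{AV}, applied to $\cD(\Gac, L) \cong \cD(\Zp^\times, L)$: a distribution of growth $h$ is determined by its integrals against monomials $x^j$ twisted by finite-order characters for $j$ in a range of length $> h$. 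The main obstacle — or rather the main point requiring care — is tracking the $p$-power and measure-normalisation factors correctly through the level-lowering and the passage from $\Zp^\times$ to $\Gac$, and making sure the non-criticality of the slope (very small slope $v_p(\alpha) < h_{\ml}$) is genuinely the hypothesis that both (a) guarantees classicality of $\varPhi$ and (b) supplies enough interpolation points for uniqueness; but since both (a) and (b) are black-boxed from \cite{BW21} and \cite{Vishik}, the argument is essentially a specialisation of the proof of Theorem \ref{Mthm} with $\Omega$ a point and the family $\PO$ replaced by the single eigenvector $\varPhi$.
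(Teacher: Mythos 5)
Your proposal matches the paper's (very terse) proof, which simply says that the result follows from the construction of $\mathfrak{L}_p$ and the arguments for Theorem \ref{Mthm}, dropping the multiplicity one hypothesis at the cost of needing $v_p(\alpha) < h_{\ml}$ for uniqueness. You have correctly identified all the moving parts: take $\Omega = \{\ml\}$, lift a test vector from $M_p(G, \mathcal{V}_{\ml}^\vee)$ to $M_p(G, \cD_{\ml})$ via the classicality isomorphism of \cite[Thm 4.4]{BW21} under non-critical slope, define $\cL_p(\widetilde{\ps}) = a_p^{-\beta}\cL_{p,\beta}(\tilde{\varPhi}_\beta)$, invoke Proposition \ref{growth} for the growth bound, Propositions \ref{blfinal} and \ref{testvectors} for the interpolation identity, and Vishik/Amice--V\'elu for uniqueness under the very small slope condition. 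Your side remark that $h_{\ml} = \#\cml - 1$ is correct and in fact flags a genuine off-by-one discrepancy between Theorem \ref{thm1} in the introduction (which writes $v_p(a_p) < \#\cml$) and Theorem \ref{theorem1} (which writes $v_p(\alpha) < h_{\ml}$); the paper itself is not consistent on which normalisation it intends, so your caution there is warranted rather than a flaw in your argument.
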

	
	\begin{proof}
		The proof follows directly from the construction of the $p$-adic $L$-function $\cL_p$ and the arguments in the proof of the previous theorem. In particular, we no longer require the multiplicity one assumption on the level but we only get uniqueness if $v_p(\alpha)< h_{\ml}$.
	\end{proof}

	\bibliographystyle{alpha}
	\bibliography{XDI}

\end{document}